\newtheorem{theorem}{Theorem}[section]
\newtheorem*{Acknowledgment}{\textnormal{\textbf{Acknowledgment}}}
\theoremstyle{definition}
\newtheorem{definition}[theorem]{Definition}
\newtheorem{corollary}[theorem]{Corollary}
\newtheorem{proposition}[theorem]{Proposition}
\numberwithin{equation}{section}
\newcommand{\beqa}{\begin{eqnarray*}}
\newcommand{\eeqa}{\end{eqnarray*}}
\newcommand{\beqn}{\begin{eqnarray}}
\newcommand{\eeqn}{\end{eqnarray}}
\newcommand{\ov}{\overline}
\renewcommand{\a}{\alpha}
\newcommand{\la}{\lambda}
\newcounter{cnt1}
\newcounter{cnt2}
\newcounter{cnt3}
\newcommand{\blr}{\begin{list}{$($\roman{cnt1}$)$}
        {\usecounter{cnt1} \setlength{\topsep}{0pt}
                \setlength{\itemsep}{0pt}}}
\newcommand{\bla}{\begin{list}{$($\alph{cnt2}$)$}
        {\usecounter{cnt2} \setlength{\topsep}{0pt}
                \setlength{\itemsep}{0pt}}}
\newcommand{\bln}{\begin{list}{$($\arabic{cnt3}$)$}
        {\usecounter{cnt3} \setlength{\topsep}{0pt}
                \setlength{\itemsep}{0pt}}}
\newcommand{\el}{\end{list}}
\newtheorem{thm}{Theorem}
\newtheorem{Def}[thm]{Definition}
\newtheorem{rem}[thm]{Remark}
\newcommand{\Rem}{\begin{rem} \rm}
\newcommand{\bdfn}{\begin{Def} \rm}
\newcommand{\edfn}{\end{Def}}
\title{Nonrough norms in spaces with small diameter} 
\author[ S. Basu , S. Seal ]
	{Sudeshna Basu$^{1}$, Susmita Seal$ ^{2}$ }
\address{{$^{1}$}   Sudeshna Basu,
		Department of Mathematics and Statistics, 
				Loyola University, 
				Baltimore, MD 21210 and
				Department of Mathematics,
				George Washington University,
				Washington DC 20052 USA 
		}
	\email{sudeshnamelody@gmail.com}
\address {{$^{2}$} Susmita Seal, 
		Department of Mathematics,
		Ramakrishna Mission Vivekananda Education and Research Institute , 
		Belur Math,  Howrah 711202,
		West Bengal, India}
	\email{susmitaseal1996@gmail.com}	
\subjclass{46B20, 46B28}
\keywords{Slices, roughness, Huskable, Denting, Dentable, Small Combination of Slices.}
\date{}
\begin{document}
\maketitle
\begin{abstract}
	In this work, we study the non rough norms  in Banach spaces with small diameter properties, namely the Ball Dentable Property ($BDP$), the Ball Huskable Property ($BHP$) and the Ball Small Combination of Slice Property ($BSCSP$).
 We introduce two more notions of non rough norms, namely the  weakly average non rough norms and average non rough norms in Banach spaces. We prove the duality between these three versions  of non rough norms and the   small diameter properties in Banach spaces. We also prove that  each of the three non rough norms is a three space property under certain assumptions.

\end{abstract}
\section{Introduction}
Throughout this work, we only consider {\it real} Banach space and
 $X^*$ stands for the dual of $X.$ We will denote the closed unit ball, the unit sphere and the closed ball of radius $r >0$ and center $x$ by $B_X$, $S_X$ and $B_X(x, r)$ respectively. 
  We refer to the monograph \cite{B1} for notions of convexity theory that we will be using here. 
 Let us begin with some definitions in geometry of Banach spaces that we need in our subsequent discussion.

\begin{definition} 
For a Banach space $X$ and bounded set $C\in X,$
\begin{enumerate}
\item a slice in $C$ is a set of the form 
$ S(C, x^*, \a) = \{x \in C : x^*(x) > \mbox{sup}~ x^*(C) - \a \}$
where  $x^* \in X^*$ and $\a > 0.$ One can analogously define $w^*$-slices in $X^*$ by choosing the determining functional from the predual space.
A convex combination of slices $S_i,$ $i =1 \ldots n$  is a set  $S =
\sum_{i=1}^{n} \la_i S_i$ where $\lambda_i>0 $ \ for all $i= 1,2\ldots n$ and $\sum_{i=1}^{n}\lambda_i =1 .$ 
\item a point $x\in C$ is called an exposed point of C if there exists $x^*\in X^*$ such that $x^*(x)>x^*(y)$ if $y\in C\setminus \{x\}$ and $x^*$ is said to expose $x.$
\item a point $x\in C$ is called a strongly exposed point of $C$ if there exists $x^*\in X^*$ such that $x^*$ exposes $x$ and $\{S(C,x^*,\alpha):\alpha>0\}$ is a local base at $x$ in $C$ for the norm topology.
\end{enumerate}
\end{definition}

\begin{definition}
\cite{BS}
 A Banach space $X$ has 
\begin{enumerate}
\item   Ball Dentable Property ($BDP$) if $B_X$ has  
   slices of arbitrarily small diameter. 
\item   Ball Huskable Property ($BHP$) if $B_X$ 
	has nonempty relatively weakly open subsets of arbitrarily small diameter.
\item   Ball Small Combination of Slice Property ($BSCSP$) 
	if  $B_X$ has convex combination of slices of arbitrarily 
	small diameter.
	\end{enumerate}
 \end{definition}
One can analogously define $w^*$-$BDP$, $w^{*}$-$BHP$ and $w^{*}$-$BSCSP$ in a dual space where slices and weakly open sets are replaced by $w^*$-slices and $w^*$-open sets  respectively.
 We have the following implications. 
$$ BDP \Longrightarrow \quad BHP \Longrightarrow \quad  BSCSP$$ $\quad \quad \quad \quad \quad \quad \quad \quad\quad\quad\quad\quad\quad\quad\quad\quad \Big \Uparrow \quad \quad\quad\quad\quad \Big \Uparrow \quad \quad\quad\quad\quad \Big \Uparrow$  $$ w^*BDP \Longrightarrow  w^*BHP \Longrightarrow  w^*BSCSP$$
In general, none of the reverse implications hold good. For details, see \cite{BS}. In fact, all  these  three properties are localised (to the closed unit ball) versions of the three well known geometric properties namely, the  Radon Nikodym Property ($RNP$), Point of Continuity Property ($PCP$) and  Strong regularity ($SR$ ), see \cite{GGMS} and \cite {BS}.

\begin{definition}
Let $(X,\|.\|)$ be a Banach space. Then norm $\|.\|$ of $X$ is said to be Fr$\acute{e}$chet differentiable at $x\in X$ if
\begin{equation}\notag
	\lim_{\|y\|\rightarrow 0} \frac{\|x+y\|+\|x-y\|-2\|x\|}{\|y\|}=0
\end{equation}
\end{definition}

\begin{definition} \label{A}
For $\varepsilon>0$, the norm of $X$ is said to be
\begin{enumerate}
\item \cite{JZ} $\varepsilon$-rough if diameter of every $w^*$ slice of $B_{X^*}$ is greater than or equal to $\varepsilon.$ 
The norm of $X$ is said to be rough if it is $\varepsilon$-rough for some $\varepsilon>0$ and it is said to be non rough otherwise .
\item \cite{HLP} weakly $\varepsilon$-average rough if
every $w^*$ open subset in  $B_{X^*}$ has diameter greater than or equal to $\varepsilon.$ 
\item \cite{D} $\varepsilon$-average rough if every convex combination of  $w^*$ slice of $B_{X^*}$ has diameter greater than or equal to $\varepsilon.$
\end{enumerate}
\end{definition}
Rough norms were first studied by Day \cite{Da}, Kurzweil \cite{Ku} and Leach and Whitfield \cite{LW}. Subsequently, it was characterized in terms of diameter of slices of closed unit ball by John and Zizler in \cite{JZ}. The authors also 
introduced the notion of $\varepsilon$-rough norms and gave an example of a  Banach space having a non rough norm with no
point of Gateaux differentiability  (see \cite{P}). Later,  Deville \cite{D} introduced the notion of $\varepsilon$-average rough norms and characterized these norms in terms of diameter of weakly open subsets of the closed unit ball. He also  showed that if $X$ is a separable Banach space then $X$ has an equivalent $\varepsilon$-average rough norm for some $\varepsilon >0$ if and only if $X$ has an equivalent 2-average rough norm. Subsequently, Godefroy
showed in \cite{Go}, that a Banach space $X$ is Asplund  ( see \cite{B1}) if and only if every equivalent norm on $X$ is non-rough. From $\check{S}$mulyan Lemma \cite{DGZ} it is known that
if the norm of $X$ is Fr$\acute{e}$chet differentiable at least at one point on $S_X$, then the norm of $X$ is non rough. But the converse is not true. There is an equivalent non rough norm on $l_1(\Gamma)$ ($\Gamma$ uncountable) with no point of Fr$\acute{e}$chet differentiability \cite{JZ}.
Haller, Langemets and  P$\tilde{o}$ldvere \cite{HLP} introduced the notion of weakly $\varepsilon$-average rough norms. The notion of   $2$-rough norms and its variations  are well studied in literature. It is known that a Banach space $X$ can be equivalently renormed to have $2$-average rough norm if and only if $X$ contains an isomorphic copy  of $l_1$ (see \cite{DGZ}). 
There is a duality between the notion  of  $2$-rough norms  
and the diameter $2$ properties.  For more details, see   \cite{HLP1}, \cite{BGLPRZ}, \cite{BGLPRZ1}.

\begin{definition}
Let $Y\subset X$ be a subspace of $X.$ The annihilator of $Y$ in the dual space $X^*$ is the subspace of $X^*$ defined by $Y^\perp=\{x^*\in X^*: x^*(y)=0\ \forall y\in Y\}.$
\end{definition}

\begin{definition}
	Let $X$ be a Banach space, $Y$ be a closed subspace of $X$ and $F$ be a closed subspace of $X^*$. Then
	\begin{enumerate}
		\item $Y$ is said to be an ideal in $X$ if $Y^{\perp}$ is the kernel of a norm one projection in $X^*.$
		\item $Y$ is said to be  an $M$-ideal if there exists a linear projection $P$ : $X^* \rightarrow X^*$ with Ker $P$ = $Y^\perp$ and $\Vert x \Vert = \Vert Px \Vert + \Vert x-Px \Vert$ for all $x \in X.$
		\item $F$ is said to be norming subspace for $X$ if $\|x\| =
		\sup\{|x^*(x)| : x^* \in B_F\}$, for all $x \in X$.
		\item $Y$ is said to be  a strict ideal if for a projection $P: X^\ast \rightarrow X^\ast$
		with $\|P\| = 1$, $ker(P) = Y^\bot$, one also has  $B_{P(X^\ast)}$ is $w^*$-dense in $B_{X^*}$ or
		in other words $B_{P(X^*)}$ is a norming set for $X.$ 
	\end{enumerate}
\end{definition}

Let $Y$ be a subspace of $X.$ If $Y$ and $X/Y$ have a property (P) implies $X$ has same the property (P), then (P) is called a three space property.

In this work,  we introduce two weaker notions of non rough norms namely, the weakly average non rough norm and the average non rough norm. We  then establish a duality between the three versions of the non rough norms and the three versions  of  the small diameter properties.
We also discuss stability property of the non rough norms, with respect to   $l_p$  ( $1\leq p \leq \infty$) sum and ideals of Banach spaces.
We further  study the three space property in the context to  the  non rough norms. We prove that if $Y$ is a finite dimensional subspace of $X$ and $P: X\rightarrow Y$ is a norm one, continuous, onto linear projection, then the norm of $X/Y$ is non rough implies that the norm of  $X$ is non rough. We further show that if $Y$ is a finite dimensional (reflexive) subspace of $X$ such that the norm of $X/Y$ is weakly average non rough (average non rough), then the norm of $X$ is weakly average non rough (average non rough). 
 In some of our proofs, we use similar techniques as in \cite{BGLPRZ}.

\section{Nonrough norms}
We begin with two definitions:
\begin{definition} \label{def non}
The norm of $X$ is
\begin{enumerate}
\item weakly average non rough if the norm of $X$ is not weakly $\varepsilon$-average rough for any $\varepsilon>0.$ 
\item average non rough if the norm of $X$ is not $\varepsilon$-average rough for any $\varepsilon>0.$ 
\end{enumerate} 
\end{definition}

The following proposition is immediate.

\begin{proposition} \label{dual 1}
The norm of $X$ is non rough (resp. weakly average non rough, average non rough) if and only if $X^*$ has $w^*$-$BDP$ (resp. $w^*$-$BHP$, $w^*$-$BSCSP$).
\end{proposition}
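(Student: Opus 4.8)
The plan is to unwind the definitions on both sides and observe that each of the three statements is an immediate dictionary translation. Concretely, for each of the three clauses I would write out what it means for the norm of $X$ to fail to be rough (resp. weakly average rough, average rough), and then compare this \emph{term by term} with what it means for $B_{X^*}$ to possess $w^*$-slices (resp. nonempty relatively $w^*$-open subsets, convex combinations of $w^*$-slices) of arbitrarily small diameter. First I would handle the non-rough case: by Definition \ref{A}(i), the norm of $X$ is $\varepsilon$-rough precisely when every $w^*$-slice of $B_{X^*}$ has diameter $\geq \varepsilon$, so the norm is rough iff there exists $\varepsilon > 0$ with this property, and hence non-rough iff for every $\varepsilon > 0$ there is some $w^*$-slice of $B_{X^*}$ of diameter $< \varepsilon$. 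But that last phrase is verbatim the statement that $B_{X^*}$ has $w^*$-slices of arbitrarily small diameter, i.e. that $X^*$ has $w^*$-$BDP$.

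Next I would carry out the same negation-of-a-for-all-$\varepsilon$ argument in the remaining two clauses, the only change being which family of subsets is involved. For the weakly average case, Definition \ref{A}(ii) says the norm is weakly $\varepsilon$-average rough iff every $w^*$-open subset of $B_{X^*}$ has diameter $\geq \varepsilon$; negating over all $\varepsilon$ via Definition \ref{def non}(i) yields exactly that $B_{X^*}$ has nonempty relatively $w^*$-open subsets of arbitrarily small diameter, which is $w^*$-$BHP$. For the average case, Definition \ref{A}(iii) and Definition \ref{def non}(ii) combine in the same way to characterize $\varepsilon$-average roughness through convex combinations of $w^*$-slices, so the average non-rough norm corresponds precisely to $w^*$-$BSCSP$. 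In each clause the logical structure is identical: "rough for some $\varepsilon$" is the existential statement that all the relevant sets are large, so its negation is the universal-in-$\varepsilon$ statement that arbitrarily small such sets exist.

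The only point requiring a word of care, rather than a genuine obstacle, is the direction of the quantifier in the definition of roughness versus the phrase "arbitrarily small diameter." One must confirm that "$X$ is $\varepsilon$-rough for \emph{some} $\varepsilon$" negates cleanly to "for \emph{every} $\varepsilon$ there is a set of diameter $< \varepsilon$," and that the families named in Definition \ref{A} (all $w^*$-slices, all $w^*$-open sets, all convex combinations of $w^*$-slices) coincide with the families named in the $w^*$-$BDP$/$BHP$/$BSCSP$ definitions. Since the definitions are stated over these same families, no approximation or limiting argument is needed; the proposition is a direct consequence of De Morgan's law applied to the quantifier over $\varepsilon$, which is presumably why the authors call it \emph{immediate}. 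I expect the full proof to be a single short paragraph treating the three clauses in parallel.
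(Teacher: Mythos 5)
Your proposal is correct and matches the paper exactly: the paper offers no written proof, declaring the proposition ``immediate,'' precisely because it is the definitional unwinding you carry out --- negating ``$\varepsilon$-rough for some $\varepsilon>0$'' (and its weakly average and average analogues from Definition \ref{A}) into ``for every $\varepsilon>0$ there is a $w^*$-slice (resp.\ nonempty relatively $w^*$-open set, convex combination of $w^*$-slices) of $B_{X^*}$ of diameter less than $\varepsilon$,'' which is verbatim the definition of $w^*$-$BDP$ (resp.\ $w^*$-$BHP$, $w^*$-$BSCSP$). Your remark that the families of sets in Definition \ref{A} coincide with those in the small diameter properties is the only point of care, and it is handled correctly.
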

Let us recall the following result from \cite{BS}.
\begin{proposition} \label{bs theorem}
\cite[Proposition 2.1]{BS}
A Banach space X has $BDP$ (resp. $BHP$ , $BSCSP$ ) if and only if $X^{**}$ has $w^*$-$BDP$ (resp. $w^*$-$BHP$, $w^*$-$BSCSP$).
\end{proposition}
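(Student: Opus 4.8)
The plan is to transfer slices, relatively weakly open sets, and convex combinations of slices between $B_X$ and $B_{X^{**}}$ by means of the canonical embedding $X\hookrightarrow X^{**}$ and Goldstine's theorem, which says that (the image of) $B_X$ is $w^*$-dense in $B_{X^{**}}$. Identifying $X$ with its canonical image, the basic bookkeeping fact is that for every $x^*\in X^*$ one has $\sup x^*(B_X)=\|x^*\|=\sup x^*(B_{X^{**}})$, so a $w^*$-slice of $B_{X^{**}}$ determined by a functional of the predual $X^*$ satisfies $S(B_{X^{**}},x^*,\a)\i X=S(B_X,x^*,\a)$; the same matching holds between the basic relatively weakly open subsets of $B_X$ defined by finitely many $x_1^*,\ld,x_n^*\in X^*$ and their relatively $w^*$-open counterparts in $B_{X^{**}}$. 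It suffices to work with such basic sets, since every relatively (weakly or $w^*$) open set contains a basic one of no larger diameter.

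Two auxiliary facts will drive all three cases. First, the norm of $X^{**}$ is $w^*$-lower semicontinuous, being the pointwise supremum of the $w^*$-continuous maps $x^{**}\mapsto x^{**}(x^*)$, $x^*\in B_{X^*}$; a standard iterated-net argument then gives $\mbox{diam}\,\ov{A}^{\,w^*}=\mbox{diam}\,A$ for every bounded $A\ci X^{**}$. Second, if $U$ is a nonempty relatively $w^*$-open subset of $B_{X^{**}}$, then $U\i X$ is $w^*$-dense in $U$, and in particular nonempty, because the $w^*$-dense set $B_X$ meets every nonempty relatively $w^*$-open subset of $U$.

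With these in hand the $BDP$ and $BHP$ equivalences are short. From $X^{**}$ to $X$ I would simply restrict: a small $w^*$-slice $S(B_{X^{**}},x^*,\a)$ (resp. a small relatively $w^*$-open set) contains its trace on $X$, which by the second fact is a nonempty slice (resp. relatively weakly open set) of $B_X$ of no larger diameter. Conversely, starting from a small slice $S(B_X,x^*,\a)$, the second fact makes it $w^*$-dense in $S(B_{X^{**}},x^*,\a)$, so that $S(B_{X^{**}},x^*,\a)\ci\ov{S(B_X,x^*,\a)}^{\,w^*}$, and the first fact gives $\mbox{diam}\,S(B_{X^{**}},x^*,\a)\le\mbox{diam}\,S(B_X,x^*,\a)$; the $BHP$ case is identical with basic relatively weakly open sets replacing slices.

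The $BSCSP$ case is where the real work lies, and the forward passage from $X$ to $X^{**}$ is the step I expect to be the main obstacle. The direction from $X^{**}$ to $X$ is again immediate, since $\sum_{i=1}^n\la_i S(B_X,x_i^*,\a_i)\ci\sum_{i=1}^n\la_i S(B_{X^{**}},x_i^*,\a_i)$ and each slice $S(B_X,x_i^*,\a_i)$ is nonempty by the second fact. For the converse, put $C=\sum_{i=1}^n\la_i S(B_X,x_i^*,\a_i)$ and $C^{**}=\sum_{i=1}^n\la_i S(B_{X^{**}},x_i^*,\a_i)$, and consider the $w^*$-continuous map $f(y_1,\ld,y_n)=\sum_{i=1}^n\la_i y_i$ on $(X^{**})^n$. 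Since $f$ carries $A=\prod_{i=1}^n S(B_X,x_i^*,\a_i)$ onto $C$, while $\prod_{i=1}^n S(B_{X^{**}},x_i^*,\a_i)\ci\ov{A}^{\,w^*}$ (the $w^*$-closure of a finite product being the product of the $w^*$-closures, by the density used above), continuity yields $C^{**}=f\big(\prod_{i=1}^n S(B_{X^{**}},x_i^*,\a_i)\big)\ci f(\ov{A}^{\,w^*})\ci\ov{f(A)}^{\,w^*}=\ov{C}^{\,w^*}$, and the first fact gives $\mbox{diam}\,C^{**}\le\mbox{diam}\,C$. The delicate point is exactly that the Minkowski sum need not commute with $w^*$-closure; the argument goes through only because the inclusion we actually need, $C^{**}\ci\ov{C}^{\,w^*}$, is the one produced by continuity of $f$, which always gives $f(\ov{A}^{\,w^*})\ci\ov{f(A)}^{\,w^*}$, together with the fact that the $w^*$-closure of a finite product is the product of the $w^*$-closures.
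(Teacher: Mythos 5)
Your proof is correct. The paper itself offers no proof of this proposition --- it is quoted verbatim from \cite[Proposition 2.1]{BS} --- and your argument (Goldstine's theorem, the $w^*$-lower semicontinuity of the bidual norm giving $\mathrm{diam}\,\overline{A}^{\,w^*}=\mathrm{diam}\,A$, matching of traces of $w^*$-slices and basic $w^*$-open sets with their counterparts in $B_X$, and the product/continuity device to get $\sum_i\lambda_i S(B_{X^{**}},x_i^*,\alpha_i)\subseteq\overline{\sum_i\lambda_i S(B_X,x_i^*,\alpha_i)}^{\,w^*}$) is precisely the standard route taken in that cited source, including the correct handling of the one delicate point, namely that Minkowski sums need not commute with $w^*$-closure.
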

The following proposition is immediate from Proposition $\ref{dual 1}$ and Proposition $\ref{bs theorem}$.

\begin{proposition} \label{dual 1 coro}
  The norm of  $X^*$ is non rough (resp. weakly average non rough, average non rough) if and only if $X$ has $BDP$ (resp. $BHP$, $BSCSP$).
\end{proposition}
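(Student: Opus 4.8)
The plan is to deduce the statement by applying Proposition $\ref{dual 1}$ to the dual space $X^*$ and then translating the resulting bidual condition back to $X$ by means of Proposition $\ref{bs theorem}$.

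First I would observe that Proposition $\ref{dual 1}$ is stated for an arbitrary Banach space, and in particular it applies verbatim with $X$ replaced by $X^*$. Read for $X^*$, its conclusion says that the norm of $X^*$ is non rough (resp. weakly average non rough, average non rough) if and only if the dual of $X^*$ --- that is, the bidual $X^{**}$ --- has $w^*$-$BDP$ (resp. $w^*$-$BHP$, $w^*$-$BSCSP$). This is the only point that warrants a moment's thought: one must check that nothing in the statement of Proposition $\ref{dual 1}$ restricts the ambient space to be a non-dual Banach space. Since it holds for \emph{every} Banach space, the substitution $X \leadsto X^*$ is legitimate.

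Next I would invoke Proposition $\ref{bs theorem}$, which asserts that $X$ has $BDP$ (resp. $BHP$, $BSCSP$) precisely when $X^{**}$ has $w^*$-$BDP$ (resp. $w^*$-$BHP$, $w^*$-$BSCSP$). Chaining this biconditional with the one obtained in the previous step, through their common condition on $X^{**}$, yields that the norm of $X^*$ is non rough (resp. weakly average non rough, average non rough) if and only if $X$ has $BDP$ (resp. $BHP$, $BSCSP$), which is exactly the assertion of Proposition $\ref{dual 1 coro}$.

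There is essentially no genuine obstacle here; the result is a formal composition of two equivalences through the intermediate bidual $X^{**}$. The only care required is bookkeeping: one must thread the three parallel cases consistently through both cited propositions, matching non rough with $BDP$, weakly average non rough with $BHP$, and average non rough with $BSCSP$, so that the ``resp.'' clauses align correctly across the chain.
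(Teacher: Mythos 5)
Your proof is correct and is precisely the argument the paper intends: the paper states the result is immediate from Proposition \ref{dual 1} (applied to $X^*$, giving the equivalence with $X^{**}$ having the corresponding $w^*$-property) combined with Proposition \ref{bs theorem}. The chaining through $X^{**}$ and the alignment of the three parallel cases is exactly how the paper's proof works.
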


Using Proposition 2.16 and Proposition 2.17 \cite {BS} and
Proposition $\ref{dual 1},$ , we have the following result. We omit the easy proof.
\begin{proposition} \label{w star}
Let $X$ and $Y$ be Banach spaces. Then the following hold. 
\begin{enumerate}
\item $X\oplus_1 Y$ has weakly average non rough (non rough) norm  if and only if  $X$ and $Y$ has weakly average non rough (non rough) norm.
\item $X\oplus_p Y$ ($1< p \leqslant \infty$) has weakly average non rough (non rough) norm  if and only if $X$ or $Y$ has weakly average non rough (non rough) norm.
\item If $X\oplus_1 Y$ has  average non rough norm , then both $X$ and $Y$ has average non rough norm.
\item  If $X$ or $Y$ has average non rough norm , then $X\oplus_p Y$ ($1< p \leqslant \infty$) has average non rough norm.
\item $X\oplus_{\infty} Y$ has average non rough norm  if and only if   $X$ or $Y$ has average non rough norm.
\end{enumerate}

\end{proposition}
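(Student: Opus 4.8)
The plan is to push everything to the dual side through Proposition \ref{dual 1} and then invoke the stability of the $w^*$ small diameter properties under absolute sums. The underlying algebraic fact is the isometric identification of finite absolute sums with their conjugate: for $1\le p\le\infty$ and $1/p+1/q=1$ (reading $q=\infty$ when $p=1$ and $q=1$ when $p=\infty$) one has $(X\oplus_p Y)^*=X^*\oplus_q Y^*$. What makes the reduction work is that the $w^*$-topology carried by $X^*\oplus_q Y^*$ as the dual of $X\oplus_p Y$ coincides with the product of the individual $w^*$-topologies; hence every $w^*$-slice, every relatively $w^*$-open subset, and every convex combination of $w^*$-slices of $B_{X^*\oplus_q Y^*}$ is determined by functionals drawn from the predual $X\oplus_p Y$. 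This is precisely the framework of Proposition 2.16 and Proposition 2.17 of \cite{BS}.

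First I would rewrite each clause in dual language. By Proposition \ref{dual 1}, the norm of $X\oplus_p Y$ is non rough (resp. weakly average non rough, average non rough) exactly when $X^*\oplus_q Y^*$ has $w^*$-$BDP$ (resp. $w^*$-$BHP$, $w^*$-$BSCSP$), while the norm of $X$ (resp. of $Y$) has the property exactly when $X^*$ (resp. $Y^*$) has the matching $w^*$ property. Thus (i) asks that $X^*\oplus_\infty Y^*$ have $w^*$-$BDP$ (resp. $w^*$-$BHP$) iff both $X^*,Y^*$ do; (ii) asks that $X^*\oplus_q Y^*$ with $1\le q<\infty$ have it iff one of $X^*,Y^*$ does; and (iii)--(v) are the corresponding one- or two-sided statements for $w^*$-$BSCSP$ on $X^*\oplus_\infty Y^*$, on $X^*\oplus_q Y^*$ with $1\le q<\infty$, and on $X^*\oplus_1 Y^*$ respectively. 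Each translated clause is one clause of Proposition 2.16 or 2.17 of \cite{BS} applied to the dual pair $X^*,Y^*$, so the statement follows by composing those propositions with the equivalences of Proposition \ref{dual 1} on both ends.

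The point that calls for care is why the dentable and huskable clauses (i), (ii) come out as full equivalences while the small-combination-of-slices clauses (iii), (iv) are only one directional. This mirrors the asymmetry in the behaviour of $w^*$-$BSCSP$ under absolute sums of duals: for the $\ell_\infty$-sum $X^*\oplus_\infty Y^*$ a small convex combination of $w^*$-slices can be restricted to each coordinate, which yields only the necessity of the ``both'' condition of (iii); for $1<q<\infty$ a small convex combination living in a single factor can be lifted to the sum, which yields only the sufficiency of the ``or'' condition of (iv); and it is exactly at $q=1$, i.e. for $X\oplus_\infty Y$, that the geometry of the $\ell_1$-ball forces a small combination to concentrate on one factor, so that both directions hold and (v) is a genuine equivalence. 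I would therefore treat (iii) and (iv) by following the convex-combination argument of \cite{BS} only in the direction in which it is valid, rather than attempting the missing converses, which is where the one real subtlety of the proof lies.
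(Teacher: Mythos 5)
Your proposal is correct and coincides with the paper's own (omitted) proof: the paper states that Proposition \ref{w star} follows from Propositions 2.16 and 2.17 of \cite{BS} together with Proposition \ref{dual 1}, which is exactly the dualization you carry out, including the identification $(X\oplus_p Y)^*=X^*\oplus_q Y^*$ with the product $w^*$-topology and the correct matching of the two-sided clauses (i), (ii), (v) against the one-sided clauses (iii), (iv).
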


We recall that for a compact Hausdorff space
$K$, $C(K,X)$ denotes the space of continuous $X$-valued functions
on $K$, equipped with the supremum norm. We recall from \cite{L1} that dispersed compact Hausdorff spaces have isolated points.

From Propositions 2.8, 2.10, Remarks 2.7, 2.9, 2.11 \cite {BS2} (Also see \cite {B2}  and \cite{BR} )  and  Proposition $\ref{dual 1},$ we have the following. We omit the easy proof. 
\begin{proposition}
Let $X$ be a Banach space and $Y$ be an ideal in $X.$
\begin{enumerate}
\item If $Y$ is an $M$-ideal in $X$ and $Y$ has non rough (respectively, weakly average non rough,  average non rough) norm, then  $X$ has non rough (respectively, weakly average non rough, average non rough) norm.
\item If $Y$ is a strict ideal in $X$ and $Y$ has non rough (respectively, weakly average non rough, average non rough) norm, then $X$ has non rough (respectively, weakly average non rough, average non rough) norm.
\item Let $K$ be a compact Hausdorff space with an isolated point. 
If $X$ has non rough (respectively, weakly average non rough, average non rough) norm, then $C(K,X)$ has non rough (respectively, weakly average non rough, average non rough) norm.
\end{enumerate}
\end{proposition}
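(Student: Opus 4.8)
The plan is to translate every non-roughness statement into a statement about the $w^*$-small diameter properties of a dual space by means of Proposition \ref{dual 1}, and then transfer that property across the relevant structural decomposition. Recall that Proposition \ref{dual 1} identifies ``the norm of $Z$ is non rough (resp. weakly average non rough, average non rough)'' with ``$Z^*$ has $w^*$-$BDP$ (resp. $w^*$-$BHP$, $w^*$-$BSCSP$)''. Thus for (1) and (2) it suffices to show that $w^*$-$BDP$, $w^*$-$BHP$ and $w^*$-$BSCSP$ pass from $Y^*$ to $X^*$ under the $M$-ideal, respectively strict ideal, hypothesis, and for (3) that they pass from $X^*$ to $C(K,X)^*$.

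For (1), I would use that an $M$-ideal $Y$ in $X$ yields the isometric $\ell_1$-decomposition $X^*=Y^*\oplus_1 Y^\perp$, the copy of $Y^*$ being realized by restriction to $Y$. Given $y\in Y$ with $S(B_{Y^*},y,\alpha)$ of small diameter, I would lift $y$ to $X$ and examine the $w^*$-slice $S(B_{X^*},y,\alpha)$. Writing $\phi=u+v$ with $u\in Y^*$ and $v\in Y^\perp$, one has $\phi(y)=u(y)$, so $u|_Y\in S(B_{Y^*},y,\alpha)$, while the constraint $u(y)>\|y\|-\alpha$ together with $\|u\|\leq 1$ forces $\|u\|>1-\alpha/\|y\|$ and hence $\|v\|\leq 1-\|u\|<\alpha/\|y\|$. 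Splitting the $\ell_1$-norm into its two coordinates then bounds the diameter of $S(B_{X^*},y,\alpha)$ by $\mathrm{diam}\,S(B_{Y^*},y,\alpha)+2\alpha/\|y\|$, which is small. The same coordinate splitting, applied to convex combinations of slices and (with more bookkeeping) to relatively $w^*$-open sets, yields the remaining two properties; this is the $M$-ideal content of the results cited from \cite{BS2}.

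For (2), a strict ideal supplies a norm-one projection $P$ on $X^*$ with $\ker P=Y^\perp$, whose range $P(X^*)$ is isometric to $Y^*$ (via restriction to $Y$) and satisfies that $B_{P(X^*)}$ is $w^*$-dense in $B_{X^*}$. Here I would start from a small $w^*$-slice $A=S(B_{Y^*},y,\alpha)$, realized as $S(B_{X^*},y,\alpha)\cap B_{P(X^*)}$, and observe by $w^*$-density that $A$ is $w^*$-dense in the genuine slice $S(B_{X^*},y,\alpha)$. The crucial point is that passing to $w^*$-closures cannot enlarge the norm diameter, since $t\,B_{X^*}$ is $w^*$-closed for every $t>0$; hence $\mathrm{diam}\,S(B_{X^*},y,\alpha)\leq\mathrm{diam}\,A$ and smallness is preserved. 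The $w^*$-$BHP$ and $w^*$-$BSCSP$ versions follow by the same density-plus-closedness argument applied to relatively $w^*$-open sets and to convex combinations of $w^*$-slices. Part (3) is the most direct: an isolated point $k_0$ makes $\{k_0\}$ clopen, so $C(K,X)\cong X\oplus_\infty C(K\setminus\{k_0\},X)$, and the conclusion is immediate from the $\oplus_\infty$ statements in Proposition \ref{w star}(2),(5).

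The step I expect to be most delicate is the strict ideal case (2), and within it the $w^*$-$BHP$ and $w^*$-$BSCSP$ variants. For ordinary slices the single determining functional $y\in Y$ transports cleanly, but a relatively $w^*$-open set or a convex combination of $w^*$-slices is not pinned down by one functional, so one must verify that the $w^*$-dense norming range $P(X^*)$ still captures the full diameter of the corresponding sets in $B_{X^*}$. The argument again hinges on the $w^*$-closedness of balls to prevent the diameter from inflating upon passage to the $w^*$-closure, and this is precisely the place where the norming hypothesis, rather than mere ideal structure, is indispensable.
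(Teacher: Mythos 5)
Your overall strategy---dualize everything through Proposition \ref{dual 1} and transfer the $w^*$-small-diameter properties from $Y^*$ to $X^*$ (resp.\ from $X^*$ to $C(K,X)^*$)---is exactly the intended content of this proposition; note that the paper itself gives no argument at all (it cites Propositions 2.8, 2.10 and Remarks 2.7, 2.9, 2.11 of \cite{BS2} together with Proposition \ref{dual 1} and ``omits the easy proof''), so your reconstruction has to stand on its own. Most of it does. Part (3) is correct and is the standard proof: $\{k_0\}$ clopen gives $C(K,X)\cong X\oplus_\infty C(K\setminus\{k_0\},X)$, and Proposition \ref{w star}(2),(5) finishes. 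Part (2) is correct, and its mechanism is the right one: $B_{P(X^*)}$ is $w^*$-dense in $B_{X^*}$, a dense set meets every relatively $w^*$-open set in a relatively dense set, and $w^*$-closures cannot increase norm diameter because $tB_{X^*}$ is $w^*$-closed; this transfers $w^*$-slices, basic $w^*$-open sets (their defining functionals lie in $Y\subset X$, and they meet $B_{P(X^*)}$ exactly in the corresponding subsets of $B_{Y^*}$), and, intersecting each slice separately and using $\overline{A}^{w^*}+\overline{B}^{w^*}\subseteq\overline{A+B}^{w^*}$ on bounded sets, convex combinations of $w^*$-slices. In part (1), the slice computation ($\|v\|\le 1-\|u\|<\alpha/\|y\|$, after shrinking $\alpha$ so that $\alpha/\|y\|$ is small) is right, and it extends verbatim to convex combinations of $w^*$-slices because each constituent slice separately pins down the $Y^\perp$-component of its elements; so the non rough and average non rough cases of (1) are fine.

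The genuine gap is the weakly average non rough ($w^*$-$BHP$) case of (1), which you wave off as ``more bookkeeping''; in fact the coordinate splitting alone fails there, and this---not part (2)---is the delicate spot. A basic relatively $w^*$-open set $U=\{\varphi\in B_{X^*}: |\varphi(y_i)-\psi_0(y_i)|<\varepsilon_i,\ i=1,\dots,n\}$ with $y_i\in Y$ is invariant under adding any $v\in Y^\perp$ that keeps the norm at most $1$, because the $y_i$ annihilate $Y^\perp$; so, unlike for a slice, nothing in the defining inequalities bounds the $Y^\perp$-component, and the estimate $\|v\|\le 1-\|u\|$ is vacuous without a lower bound on $\|u\|$. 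The missing ingredient is: if $Y^*$ is infinite dimensional, every nonempty relatively $w^*$-open subset of $B_{Y^*}$ contains a point of $S_{Y^*}$ (push any of its points along a nonzero element of the finite-codimensional subspace $\bigcap_{i}\ker y_i$ until the norm reaches $1$). Hence a relatively $w^*$-open subset of $B_{Y^*}$ of diameter $<\varepsilon$ consists of points of norm $>1-\varepsilon$, and only then does the $\ell_1$-splitting give $\|v\|<\varepsilon$ for every $u+v\in U$, so that $\mathrm{diam}(U)<3\varepsilon$. (The degenerate case $\dim Y<\infty$ needs a separate remark: a proper M-ideal contains a copy of $c_0$, so such a $Y$ is an M-summand and Proposition \ref{w star} applies directly.) Note finally that you cannot instead fold (1) into the density argument of (2): an M-ideal need not be a strict ideal---for an M-summand, $X=Y\oplus_\infty Z$ with $Z\neq\{0\}$, the copy of $B_{Y^*}$ is never $w^*$-dense in $B_{X^*}$---so this extra geometric fact is genuinely indispensable.
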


We now explore the  non rough norm in the context of three space property. For that, we need the following results.

\begin{theorem}\label{dobt}
\cite[Theorem 5.3]{SSW}
Let $P:X\rightarrow X$ be 
a norm one, continuous
 linear projection with Ker(P) finite dimensional. Then
$\inf \{$diam(S): S is a slice of $B_X\} \leqslant \inf \{$diam(T): T is a slice of $B_{P(X)}\}.$
\end{theorem}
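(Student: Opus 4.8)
My plan is to establish the equivalent assertion that for every slice $T$ of $B_{P(X)}$ and every $\varepsilon>0$ there is a slice $S$ of $B_X$ with $\mathrm{diam}(S)<\mathrm{diam}(T)+\varepsilon$; taking the infimum over all such $T$ then yields the stated inequality. Write $Y=P(X)$, $Z=\ker P$ and $Q=I-P$, so that $X=Y\oplus Z$ with $\dim Z<\infty$ and $\|Q\|\le 2$. From $\|P\|=1$ I would record the two facts used throughout: $B_Y=B_X\cap Y$, and $P(B_X)=B_Y$ (the inclusion $P(B_X)\subseteq B_Y$ is $\|Px\|\le\|x\|$, and $y=Py$ for $y\in B_Y$).

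Given $T=S(B_{P(X)},y^*,\alpha)$, set $f=y^*\circ P\in X^*$. Then $\sup f(B_X)=\sup y^*(B_Y)=:M$, and $S(B_X,f,\delta)=\{x\in B_X:Px\in S(B_Y,y^*,\delta)\}$, so the $P$-image of this slice lies in a $y^*$-slice of $B_Y$ of diameter at most $\mathrm{diam}(T)$ as soon as $\delta\le\alpha$. The obstruction is that $S(B_X,f,\delta)$ is completely uncontrolled in the $Z$-direction, and this is exactly where I would exploit $\dim Z<\infty$. The sets $A_\delta:=\overline{Q(S(B_X,f,\delta))}$ and $W_0:=\bigcap_{\delta>0}A_\delta$ are compact convex subsets of the finite-dimensional space $Z$, so $W_0$ has a strongly exposed point. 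I would fix $z^*\in Z^*$ strongly exposing a point $z_0$ of $W_0$, with $m:=\max_{W_0}z^*$. A compactness argument—the $A_\delta$ decrease to $W_0$, and any cluster point of their top $z^*$-slices must maximise $z^*$ on $W_0$, hence equal $z_0$—then upgrades this to $\mathrm{diam}\{w\in A_\delta:z^*(w)>m-\gamma\}\to 0$ as $\delta,\gamma\to 0$.

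The slice I would use is $S=S(B_X,\,f+t\,(z^*\circ Q),\,\beta)$ for suitably small $t,\beta>0$. Writing $g=z^*\circ Q$, the number $m$ is precisely the one-sided directional derivative, at $f$ in the direction $g$, of the convex function $x^*\mapsto\sup x^*(B_X)$, so that $\sup(f+tg)(B_X)=M+tm+o(t)$. Substituting this into the defining inequality of $S$ and using $f\le M$ together with $g\le\sup g(B_X)$ pins, for every $x\in S$, the value $f(x)$ to within $\delta_1:=t(\sup g(B_X)-m)+\beta+o(t)$ of $M$, and $g(x)$ to within $\beta/t$ of $m$. Taking $t$ small and then $\beta$ a small multiple of $t$ makes both $\delta_1$ and $\beta/t$ as small as desired; then every $x\in S$ satisfies $Px\in T$ (so the $P$-projection of $S$ has diameter at most $\mathrm{diam}(T)$) and $Qx\in\{w\in A_{\delta_1}:z^*(w)>m-\beta/t\}$ (so the $Q$-projection of $S$ has diameter $<\varepsilon$). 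Since $\|x_1-x_2\|\le\|Px_1-Px_2\|+\|Qx_1-Qx_2\|$, this bounds $\mathrm{diam}(S)$ by $\mathrm{diam}(T)+\varepsilon$.

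I expect the main difficulty to be exactly this coordination step: a single functional must keep the $Y$-projection trapped in a thin $y^*$-slice while simultaneously collapsing the $Z$-projection to a point. The apparent scale mismatch—the $y^*$-slice width is comparable to $\alpha$, while the $Z$-slice must shrink to zero—is resolved by the freedom to shrink the $y^*$-slice as well (smaller slices of $B_Y$ only have smaller diameter), together with the convex-analytic fact that along the perturbed slices $g$ is trapped near $m$ from both sides; strong exposedness in the finite-dimensional $Z$ is what converts ``small in the single functional $z^*$'' into ``small in norm.'' Non-attainment of $\sup y^*(B_Y)$ causes no trouble here, since the entire argument runs through the closures $A_\delta$ and the directional derivative rather than through an actual maximal face.
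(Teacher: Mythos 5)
There is nothing in the paper to compare your argument against: the paper does not prove this theorem at all, but imports it verbatim from \cite[Theorem 5.3]{SSW}, and the $w^*$-analogue stated immediately after it is likewise only asserted to follow ``similarly.'' Judged on its own terms, your proposal is correct. The reduction (for every slice $T$ of $B_{P(X)}$ and every $\varepsilon>0$, produce a slice $S$ of $B_X$ with $\mathrm{diam}(S)\leq\mathrm{diam}(T)+\varepsilon$) is the right equivalent form; the identities $P(B_X)=B_{P(X)}$ and $S(B_X,\,y^*\circ P,\,\delta)=\{x\in B_X:\ Px\in S(B_{P(X)},y^*,\delta)\}$ are exactly where $\|P\|=1$ enters; the sets $A_\delta$ are nonempty, nested, compact convex subsets of the finite-dimensional kernel, so $W_0$ is nonempty and has strongly exposed points (for compact sets, exposed already implies strongly exposed, and existence follows from the Phelps theorem quoted later in the paper, since finite-dimensional spaces have the $RNP$); and your compactness argument that $\mathrm{diam}\{w\in A_\delta:\ z^*(w)>m-\gamma\}\to 0$ is sound, since every cluster point of such points lies in $W_0$, has $z^*$-value at least $m$, and hence equals $z_0$.

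Two steps you assert rather than prove should be written out in a final version, though both are true and routine. First, the claim that $m$ is the right derivative at $0$ of $t\mapsto\sup(f+tg)(B_X)$ needs (a) the max formula $\lim_{t\downarrow 0}t^{-1}\bigl(\sup(f+tg)(B_X)-M\bigr)=\lim_{\delta\downarrow 0}\sup g\bigl(S(B_X,f,\delta)\bigr)$, proved by splitting $B_X$ into the slice and its complement for the upper estimate and using $\sup(f+tg)(B_X)\geq M-\delta+t\sup g(S(B_X,f,\delta))$ with $\delta\downarrow 0$ at fixed $t$ for the lower one, and (b) the identification $\lim_{\delta\downarrow 0}\sup g(S(B_X,f,\delta))=\lim_{\delta\downarrow 0}\max z^*(A_\delta)=\max z^*(W_0)=m$, again by nested compactness. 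Note that convexity makes the error $\sup(f+tg)(B_X)-M-tm$ nonnegative, so it can simply be discarded in your two estimates on $S$, giving $f(x)>M-\bigl(t(\sup g(B_X)-m)+\beta\bigr)$ and $g(x)>m-\beta/t$ cleanly. Second, the order of the parameter choices (take $\beta=st$, fix $s$ below the $\gamma$-threshold of the compactness step, then shrink $t$ so that $\delta_1$ falls below both $\alpha$ and the $\delta$-threshold) and the degenerate cases $\ker P=\{0\}$ and $W_0$ a singleton deserve a line each. None of this is a gap; the plan is complete in its essentials.
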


Proceeding similarly as in \cite[Theorem 5.3]{SSW}, we have the following :

\begin{theorem}\label{dobt1}
Let $P:X^*\rightarrow X^*$ be
a norm one, continuous linear projection with Ker(P) finite dimensional. Then
$\inf \{$diam(S): S is a $w^*$-slice of $B_{X^*}\} \leqslant \inf \{$diam(T): T is a $w^*$-slice of $B_{P(X^*)}\}.$
\end{theorem}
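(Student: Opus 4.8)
The plan is to adapt the slice-based argument of Theorem~\ref{dobt} to the $w^*$-setting, keeping track of the fact that $\mbox{Ker}(P)$ is finite dimensional so that the relevant quotients and duals behave well. Write $F = P(X^*)$ and $E = \mbox{Ker}(P) = Y^\perp$ for some closed subspace $Y$ of $X$; since $E$ is finite dimensional, $F$ is a $w^*$-closed subspace of $X^*$ of finite codimension, and we may identify $F$ with a dual space via $F \cong (X/Z)^*$ for an appropriate $Z$, so that $w^*$-slices of $B_F$ make sense. The goal is: given any $w^*$-slice $T$ of $B_F$ with $\mbox{diam}(T)$ close to the infimum on the right, produce a $w^*$-slice $S$ of $B_{X^*}$ whose diameter is at most $\mbox{diam}(T)$ plus an error that can be driven to $0$.

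First I would fix a $w^*$-slice $T = S(B_F, x, \a)$ of $B_F$, determined by a functional $x$ coming from the relevant predual and some $\a > 0$. The natural candidate on the big ball is the $w^*$-slice $S = S(B_{X^*}, x, \b)$ for a suitable $\b$, where $x$ is viewed as a predual functional for $X^*$ (i.e. $x \in X$). The key geometric input, exactly as in \cite[Theorem 5.3]{SSW}, is that the norm-one projection $P$ lets one compare $\sup x(B_{X^*})$ with $\sup x(B_F)$ and control how far a point $x^* \in S$ sits from $F = P(X^*)$: writing $x^* = Px^* + (x^* - Px^*)$ with $x^* - Px^* \in E$, one estimates $\|x^* - Px^*\|$ in terms of $\a$ and the defining inequality of the slice. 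Because $E$ is finite dimensional, the unit ball of $E$ is compact, and a compactness/volume argument shows that the portion of $x^*$ lying in $E$ must be small once $\b$ is chosen small relative to $\a$. This is what forces $Px^*$ to lie in a thin $w^*$-slice of $B_F$ (up to normalization) and pins $\mbox{diam}(S)$ near $\mbox{diam}(T)$.

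The main obstacle I anticipate is the bookkeeping needed to pass between $B_{X^*}$ and $B_F$ through the projection $P$ while respecting the $w^*$-topology: $P$ need not be $w^*$-continuous in general, so one must verify that the functional $x$ chosen to cut $T$ genuinely extends to a predual functional cutting a bona fide $w^*$-slice of $B_{X^*}$, and that $P(B_{X^*}) \subseteq B_F$ together with the norming property keeps the suprema aligned. I would handle this by using that $\mbox{Ker}(P)$ is finite dimensional, which guarantees $F$ is $w^*$-closed and that the quotient map $X^* \to X^*/E$ interacts nicely with the weak-star structure; finite dimensionality of $E$ is exactly the hypothesis that turns the potentially lossy comparison into a sharp one, since the correction term lives in a compact finite-dimensional set.

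Finally I would take infima. For any $\e > 0$, choosing $T$ with $\mbox{diam}(T) < \inf\{\mbox{diam}(T'): T' \text{ a } w^*\text{-slice of } B_F\} + \e$ and then selecting $\b$ small enough, the construction yields a $w^*$-slice $S$ of $B_{X^*}$ with $\mbox{diam}(S) \leqslant \mbox{diam}(T) + \e$. Letting $\e \to 0$ gives
\[
\inf\{\mbox{diam}(S): S \text{ is a } w^*\text{-slice of } B_{X^*}\} \leqslant \inf\{\mbox{diam}(T): T \text{ is a } w^*\text{-slice of } B_{F}\},
\]
which is precisely the claim. The whole argument mirrors \cite[Theorem 5.3]{SSW} line by line, with ordinary slices replaced by $w^*$-slices and norm-duality replaced by predual-duality, so I expect no new ideas beyond verifying the $w^*$-compatibility at each step.
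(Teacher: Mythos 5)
Your overall plan --- transplant the argument of \cite[Theorem 5.3]{SSW} to the $w^*$-setting --- is exactly what the paper does (its ``proof'' is literally the remark ``proceeding similarly as in \cite[Theorem 5.3]{SSW}''), but the mechanism you propose for the key step is not the SSW mechanism, and it fails. You keep the \emph{same} functional $x\in X$ that cuts the thin slice $T=S(B_F,x,\alpha)$ of $B_F$ and claim that, for $\beta$ small, compactness of the unit ball of $\mathrm{Ker}(P)$ forces the kernel component of any $x^*\in S(B_{X^*},x,\beta)$ to be small. This is false: take $X=\ell_1^2$, so $X^*=\ell_\infty^2$ and $B_{X^*}=[-1,1]^2$, and let $P$ be the coordinate projection onto $F=\mathbb{R}\times\{0\}$ (norm one, one-dimensional kernel). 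The functional $x=(1,0)$ cuts $w^*$-slices of $B_F$ of diameter $\alpha$, yet $S(B_{X^*},x,\beta)\supseteq\{1\}\times[-1,1]$ has diameter $2$ for every $\beta>0$: the slice inequality says nothing about the kernel component, since here $x$ annihilates $\mathrm{Ker}(P)$. The theorem is still true in this example, but only because the slicing functional gets \emph{perturbed}: replacing $(1,0)$ by $(1,\epsilon)$ pushes the slice into a corner of the square. This tilting of the functional in directions that see $\mathrm{Ker}(P)$ is the actual content of the SSW proof, and it is entirely absent from your proposal.

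This exposes the second gap, which you flag but then dismiss with an incorrect claim. Once tilting is accepted as necessary, the $w^*$-adaptation requires the tilted functionals to remain $w^*$-continuous, i.e.\ to come from $X$ and to act nontrivially on $\mathrm{Ker}(P)$ while being controlled on $F=P(X^*)$; such elements of $X$ exist in the needed supply precisely when $F$ is $w^*$-closed. You assert that finite-dimensionality of $\mathrm{Ker}(P)$ ``guarantees $F$ is $w^*$-closed,'' but that is unjustified: a norm-closed, finite-codimensional subspace of a dual space can be $w^*$-dense (e.g.\ the kernel of a functional that is not $w^*$-continuous), so $w^*$-closedness of the range is a genuine issue rather than a formality. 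In the paper's application (Corollary \ref{prop weak star bdp 3 space}) the range is $Y^{\perp}$, which is $w^*$-closed by construction, so the issue is moot there; but for the theorem as you argue it, one must either prove that the range of a norm-one projection with finite-dimensional kernel on a dual space is automatically $w^*$-closed, or add that hypothesis. In short, your proposal matches the paper's one-line strategy, but at the two places where real work is required --- the perturbation of the slicing functional and the $w^*$-continuity of the perturbed functionals --- it substitutes claims that are, respectively, false and unproved.
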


The following corollary is immediate. 

\begin{corollary}\label{prop weak star bdp 3 space}
Let $Y$ be a closed subspace of $X$ and $P:X^*\rightarrow Y^{\perp}$ be a norm one, continuous, onto linear projection with Ker(P) finite dimensional. If $Y^{\perp}$ has $w^*$-$BDP,$ then $X^*$ has $w^*$-$BDP.$ 
\end{corollary}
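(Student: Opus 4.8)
The plan is to read off the corollary directly from Theorem \ref{dobt1}, with only a small amount of bookkeeping about which $w^*$ topology is in play. First I would observe that, although $P$ is presented as a map onto $Y^{\perp}$, it is in particular a norm one, continuous linear projection of $X^*$ into itself with range $P(X^*)=Y^{\perp}$ and finite-dimensional kernel. Thus Theorem \ref{dobt1} applies verbatim to $P$ and yields
$$\inf\{\operatorname{diam}(S): S \text{ is a } w^*\text{-slice of } B_{X^*}\} \leqslant \inf\{\operatorname{diam}(T): T \text{ is a } w^*\text{-slice of } B_{Y^{\perp}}\}.$$

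Next I would verify that the right-hand side is governed by exactly the slices that witness the $w^*$-$BDP$ of $Y^{\perp}$. Identifying $Y^{\perp}$ with $(X/Y)^*$, its intrinsic $w^*$ topology $\sigma((X/Y)^*, X/Y)$ is generated by the evaluations at cosets $x+Y$; these evaluations are precisely the restrictions to $Y^{\perp}$ of the canonical functionals $\hat{x}$, $x\in X$, and those same restrictions generate the subspace topology $\sigma(X^*,X)\big|_{Y^{\perp}}$. Hence the two candidate notions of ``$w^*$-slice of $B_{Y^{\perp}}$'' coincide, so the infimum on the right is literally the infimum over the slices appearing in the definition of $w^*$-$BDP$ for $Y^{\perp}$.

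Finally, since $Y^{\perp}$ has $w^*$-$BDP$, its closed unit ball admits $w^*$-slices of arbitrarily small diameter, so the right-hand infimum above is $0$. The displayed inequality then forces the left-hand infimum to be $0$ as well, which says exactly that $B_{X^*}$ has $w^*$-slices of arbitrarily small diameter, i.e. $X^*$ has $w^*$-$BDP$. The only genuinely substantive point, and the one I would treat with care, is the topological identification in the second paragraph: one must be sure that the quantity estimated by Theorem \ref{dobt1} is the infimum over the correct family of slices. Once that compatibility is recorded, the corollary is an immediate consequence of the theorem.
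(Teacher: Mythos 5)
Your proposal is correct and is essentially the paper's own argument: the paper states this corollary as an immediate consequence of Theorem \ref{dobt1}, exactly as you derive it, with your second paragraph merely making explicit the routine identification of the $w^*$-slices of $B_{Y^{\perp}}$ (as slices of the dual ball of $(X/Y)^*$ determined by cosets $x+Y$) with the slices determined by restrictions of functionals $\hat{x}$, $x\in X$. That compatibility check is the right detail to record, but it does not constitute a different route.
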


Since any finite dimensional space and any reflexive space has $BDP$ and hence all other small diameter properties,  we assume that in our following discussion without mentioning it separately in each case.

\begin{theorem}
Let $Y$ be a finite dimensional subspace of $X$ such that there exists a norm one continuous linear projection $P:X\rightarrow Y.$ If the norm of $X/Y$ is non rough, then  the norm of $X$ is non rough.
\end{theorem}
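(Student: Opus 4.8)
The plan is to reduce everything to the dual statement about the weak-star ball dentable property and then invoke the finite-dimensional perturbation result already recorded. By Proposition $\ref{dual 1}$, the norm of $X$ is non rough if and only if $X^*$ has $w^*$-$BDP$, so it suffices to produce $w^*$-slices of $B_{X^*}$ of arbitrarily small diameter. Applying Proposition $\ref{dual 1}$ to the quotient $X/Y$, the hypothesis that the norm of $X/Y$ is non rough is equivalent to $(X/Y)^*$ having $w^*$-$BDP$.

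The first key step is to transport this information into $X^*$ through the canonical isometry $(X/Y)^*\cong Y^{\perp}$, where $Y^{\perp}\subseteq X^*$ carries the relative $w^*$-topology inherited from $X^*$. I would verify that under this identification the $w^*$-topology of $(X/Y)^*$ coincides with the relative $w^*$-topology of $Y^{\perp}$ (both being the topology of pointwise convergence against representatives $x\in X$), so that the two notions of $w^*$-slice, and hence their diameters, match exactly. Consequently $(X/Y)^*$ has $w^*$-$BDP$ precisely when $Y^{\perp}$, as a subspace of $X^*$, has $w^*$-$BDP$.

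Next I would pass from the given projection $P$ to its adjoint. Since $Y$ is finite dimensional, $P$ has finite rank, so $P^*:X^*\to X^*$ is a norm one finite-rank projection with $\mathrm{Ker}(P^*)=Y^{\perp}$ and $\mathrm{Ran}(P^*)=(\mathrm{Ker}\,P)^{\perp}$. The complementary projection $Q:=I-P^*$ is then a projection of $X^*$ \emph{onto} $Y^{\perp}$ whose kernel $\mathrm{Ran}(P^*)$ is finite dimensional of dimension $\dim Y$. With such a $Q$ at hand, Corollary $\ref{prop weak star bdp 3 space}$ (equivalently, the diameter inequality of Theorem $\ref{dobt1}$) transfers the $w^*$-$BDP$ from $Y^{\perp}$ to $X^*$, and a final application of Proposition $\ref{dual 1}$ yields that the norm of $X$ is non rough.

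The main obstacle I anticipate is that Corollary $\ref{prop weak star bdp 3 space}$ requires the projection onto $Y^{\perp}$ to have norm one, whereas $Q=I-P^*$ satisfies only $\|Q\|=\|I-P\|$, which can exceed $1$ even when $\|P\|=1$ (already in two dimensions, for a suitable oblique projection). I would circumvent this by noting that here only the implication ``infimum of slice diameters equals $0$'' needs to be transferred, and for that the precise constant in the diameter inequality is irrelevant: the mechanism behind Theorem $\ref{dobt1}$ rests on $\mathrm{Ker}(Q)$ being finite dimensional, so that lifting a small $w^*$-slice of $B_{Y^{\perp}}$ to a $w^*$-slice of $B_{X^*}$ enlarges the diameter only by a bounded, finite-dimensional amount that can be trimmed away by adjoining finitely many further slicing functionals. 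Thus it is the finite dimensionality of $Y$, rather than the norm of the complementary projection, that carries the argument, and the SSW-type construction underlying Theorem $\ref{dobt1}$ applies to $Q$ directly.
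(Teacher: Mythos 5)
Your overall route is the same as the paper's: dualize via Proposition~$\ref{dual 1}$, identify $(X/Y)^*$ with $Y^{\perp}$ together with its relative $w^*$-topology, form the complementary projection $Q=I-P^*i^*$ onto $Y^{\perp}$ whose kernel $\mathrm{Ker}(P)^{\perp}$ is finite dimensional, and then transfer $w^*$-$BDP$ from $Y^{\perp}$ to $X^*$ by Corollary~$\ref{prop weak star bdp 3 space}$. You are also right to flag the norm-one obstruction, and it is a sharp observation: this is exactly the point where the paper's own proof is defective, since the paper replaces $Q$ by $Q/\|Q\|$ and calls it a ``norm one, continuous, onto linear projection,'' but a scaled idempotent is no longer idempotent, so $Q/\|Q\|$ is not a projection at all unless $\|Q\|=1$ (and $\|Q\|=\|I-P\|$ can equal $2$ even for a norm-one $P$, e.g.\ $P(x,y)=(x+y,0)$ on $\ell_1^2$).

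However, your proposed repair is a genuine gap, not a proof. First, the assertion that ``the SSW-type construction underlying Theorem~$\ref{dobt1}$ applies to $Q$ directly'' is precisely what would need to be proved: the norm-one hypothesis there is not cosmetic. It guarantees that the projection maps $B_{X^*}$ into $B_{P(X^*)}$, so that a slice of the big ball is carried into a slice of the small ball; when $\|Q\|>1$, a point $x^*$ of a $w^*$-slice of $B_{X^*}$ only gives $Qx^*\in\|Q\|B_{Y^{\perp}}$, and smallness of the slices of $B_{Y^{\perp}}$ says nothing about the diameter of sets of the form $\{z\in\|Q\|B_{Y^{\perp}}: f(z)>\sup f(B_{Y^{\perp}})-\delta\}$, which may contain points of norm up to $\|Q\|$ far from the small slice. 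Second, your trimming mechanism --- ``adjoining finitely many further slicing functionals'' --- does not produce a slice: intersecting a slice with finitely many additional slice conditions yields a relatively $w^*$-open set. That is exactly the device used in the paper's Proposition~$\ref{prop weak star bhp 3 space}$, and it proves $w^*$-$BHP$, not $w^*$-$BDP$; the inability to add functionals is the whole reason the slice case needs the more delicate projection argument. So as written, your argument would establish at best that the norm of $X$ is weakly average non rough. Closing the gap requires either a version of Theorem~$\ref{dobt1}$ valid for projections of arbitrary norm with finite-dimensional kernel, or the construction of a genuinely norm-one projection of $X^*$ onto $Y^{\perp}$ --- neither of which you supply (and, it should be said, neither does the paper).
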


\begin{proof}
Since $Y$ is finite dimensional,  $Y^*$ is also finite dimensional.  Hence Ker$(P)^{\perp}(=$Image$(P^*))$ is also finite dimensional. Consider, 
$Q= I-P^* i^*$ where $i:Y\rightarrow X$ is the inclusion map and $I$ is the identity map on $X^*.$ Then Ker$(Q)$=Ker$(P)^{\perp}$ and Image$(Q)=Y^{\perp}.$ Thus, $\frac{Q}{\|Q\|} : X^* \rightarrow Y^{\perp}$ is a norm one, continuous, onto linear projection.
 Also, taking into account the duality $(X/Y)^*= Y^{\perp}$ and Proposition $\ref{dual 1},$ we have, $Y^{\perp}$ has $w^*$-$BDP.$ Finally, applying 
Corollary $\ref{prop weak star bdp 3 space}$ and Proposition $\ref{dual 1}$ successively, we get our desired result.
\end{proof}

\begin{proposition} \label{prop weak star bhp 3 space}
Let $Y$ be a subspace of $X$ such that $X^*/Y^{\perp}$ is finite dimensional and 
$Y^{\perp}$ has $w^*$-$BHP.$ Then  $X^*$ has $w^*$-$BHP.$
\end{proposition}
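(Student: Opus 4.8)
The plan is to lift a small-diameter relatively $w^*$-open subset of $B_{Y^{\perp}}$ up to one of $B_{X^*}$, exploiting that the extra directions form a finite-dimensional piece which can be pinned down by finitely many $w^*$-continuous evaluations. First I would record two consequences of the hypothesis that $X^*/Y^{\perp}$ is finite dimensional. Since $X^*/Y^{\perp}$ is isometric to $Y^*$ via restriction, $Y$ is finite dimensional (hence closed); fix a basis $y_1,\dots,y_m$ of $Y$. Moreover, for every $x^*\in X^*$ one has $\mbox{dist}(x^*,Y^{\perp})=\|x^*|_Y\|_{Y^*}$, so by equivalence of norms on the finite-dimensional $Y$ there is a constant $C>0$ with $\mbox{dist}(x^*,Y^{\perp})\le C\max_j|x^*(y_j)|$. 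Thus forcing the numbers $x^*(y_j)$ to be small forces $x^*$ to be norm-close to $Y^{\perp}$; this is the mechanism that will control the new direction. Finally, the relative $w^*$-topology that $Y^{\perp}$ inherits from $X^*$ agrees with its intrinsic $w^*$-topology as $(X/Y)^*$, so basic $w^*$-neighbourhoods in $Y^{\perp}$ are cut out by evaluations at (lifts of) elements of $X$.

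Next, given $\e>0$, I would invoke the $w^*$-$BHP$ of $Y^{\perp}$ to obtain a nonempty relatively $w^*$-open $V\subseteq B_{Y^{\perp}}$ with $\mbox{diam}(V)<\e/2$. Choosing a point $z_0^*\in V$ and shrinking, I may assume $V$ contains a basic neighbourhood $N=\{z^*\in B_{Y^{\perp}}:|z^*(a_i)-z_0^*(a_i)|<\b,\ i=1,\dots,p\}$ for some $a_1,\dots,a_p\in X$ and $\b>0$; note that $z_0^*$ sits in $N$ with a full margin. The candidate set in $X^*$ is $U=\{x^*\in B_{X^*}:|x^*(a_i)-z_0^*(a_i)|<\b/2\ (i\le p),\ |x^*(y_j)|<\d\ (j\le m)\}$, where $\d>0$ is to be chosen small. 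This $U$ is relatively $w^*$-open in $B_{X^*}$ (each defining inequality is an evaluation at a fixed element of $X$, the slab $|x^*(y_j)|<\d$ being open because evaluation at $y_j$ is $w^*$-continuous), and it is nonempty since $z_0^*\in U$.

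Finally I would estimate $\mbox{diam}(U)$. For $x^*\in U$ the distance bound gives $\mbox{dist}(x^*,Y^{\perp})\le C\d$, so there is $u^*\in Y^{\perp}$ with $\|x^*-u^*\|<C_1\d$ for a fixed constant $C_1$; rescaling $u^*$ into $B_{Y^{\perp}}$ costs at most another $C_1\d$, and because of the $\b/2$-versus-$\b$ margin the rescaled point $\hat u^*$ still satisfies $|\hat u^*(a_i)-z_0^*(a_i)|<\b$ once $\d$ is small, i.e.\ $\hat u^*\in N\subseteq V$. Hence every point of $U$ lies within $O(\d)$ of a point of $V$, and for $x^*,w^*\in U$ the triangle inequality yields $\|x^*-w^*\|\le \mbox{diam}(V)+O(\d)<\e$ for $\d$ chosen small enough. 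Therefore $B_{X^*}$ has nonempty relatively $w^*$-open sets of arbitrarily small diameter, which is precisely $w^*$-$BHP$.

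The step I expect to be delicate is this last landing-back-in-$V$ argument: approximating $x^*$ by an element of $Y^{\perp}$ and then rescaling into the unit ball each introduce $O(\d)$ errors, which would otherwise push the approximant just outside the strict inequalities defining the open set $V$. Building the factor-$2$ margin into $U$ (half-width $\b/2$ against $N$'s width $\b$) is exactly what absorbs these errors. It is worth noting that, in contrast with Corollary~$\ref{prop weak star bdp 3 space}$, no norm-one projection onto $Y^{\perp}$ is needed here; finite-dimensionality of $Y$ alone supplies both the $w^*$-continuous evaluations controlling the new direction and the distance estimate, so the argument runs without assuming any norming or contractive projection.
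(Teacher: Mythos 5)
Your proof is correct, and although it shares the paper's overall skeleton --- lift the small relatively $w^*$-open set from $B_{Y^{\perp}}$ to a relatively $w^*$-open set in $B_{X^*}$ whose points are forced to be norm-close to $Y^{\perp}$, then push points back into $B_{Y^{\perp}}$ by approximation plus rescaling, with built-in margins absorbing the $O(\delta)$ errors --- the way you implement the crucial ``pinning'' step is genuinely different, as is the logical structure. The paper argues by contradiction and controls the distance to $Y^{\perp}$ abstractly: it forms the lifted neighbourhood $U$, applies the Open Mapping Theorem to the quotient map $P:X^*\rightarrow X^*/Y^{\perp}$ to find $\delta$ with $B(0,\delta)\subset P(U)$, and takes $B=P^{-1}(B(0,\delta))\cap U$, which is relatively $w^*$-open because finite-dimensionality of $X^*/Y^{\perp}$ makes $P$ in effect $w^*$-to-norm continuous; two far-apart points of $B$ then yield two points of the small set $W$ at distance greater than $\varepsilon-4\delta$, contradicting $\mathrm{diam}(W)<\frac{\varepsilon}{2}$. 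You instead work directly and quantitatively: from $X^*/Y^{\perp}\cong Y^*$ you extract a basis $y_1,\ldots,y_m$ of the (necessarily finite-dimensional) $Y$, and combine the identity $\mathrm{dist}(x^*,Y^{\perp})=\|x^*|_Y\|_{Y^*}$ with equivalence of norms on the finite-dimensional $Y^*$ to convert the $w^*$-open constraints $|x^*(y_j)|<\delta$ into the uniform bound $\mathrm{dist}(x^*,Y^{\perp})\leqslant C\delta$. This trades the Open Mapping Theorem and the quotient map for elementary finite-dimensional linear algebra, and it yields the diameter estimate for $U$ directly rather than via contrapositive, which is arguably cleaner. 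Both arguments rely on the same two-step error budget (approximate into $Y^{\perp}$, then renormalize into the unit ball) and on margins between the defining inequalities (your $\beta/2$ versus $\beta$; the paper's $\tilde{\varepsilon_i}+\delta_0\|y_i\|<\varepsilon_i$), so the point you flag as delicate is exactly the one the paper also takes care of. Your closing remark that no norm-one projection onto $Y^{\perp}$ is needed, in contrast with Corollary \ref{prop weak star bdp 3 space}, accurately reflects the hypotheses of the proposition.
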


\begin{proof}

We prove by contradiction. Suppose $X^*$ does not have  $w^*$-$BHP.$ Then there exists $\varepsilon>0$ such that diameter of any nonempty relatively $w^*$ open set in $B_{X^*}$ is greater than $\varepsilon.$ 
By using the fact $Y^{\perp}$ has $w^*BHP,$  we get a nonempty relatively $w^*$ open set
 $$W=\{y^*\in B_{Y^{\perp}} :|(y^*-y_0^*)(y_i)|<\varepsilon_i \quad \forall i=1,2,\ldots,n\}$$ 
 (where $y_0^*\in B_{Y^{\perp}}$ and $y_1,\ldots,y_n \in X$)
 in $B_{Y^{\perp}}$ with diameter less than $\frac{\varepsilon}{2}.$ 
	
	For each $i=1,2,\ldots,n$, we choose 
	$\tilde{\varepsilon_i}>0$
	and $0<\delta_0<\frac{\varepsilon}{4}$ such that 
	$\tilde{\varepsilon_i} + \delta_0\|y_i\|<\varepsilon_i.$ 

	Define, 
	$$U=\{x^*\in B_{X^*}: |(x^*-y_0^*)(y_i)|<\tilde{\varepsilon_i} \quad \forall i=1,2,\ldots,n\}.$$
	Then $U\neq\emptyset$ and $U$ is relatively $w^*$ open in $B_{X^*}.$
	 Define $P:X^*\rightarrow X^*/{Y^{\perp}}$ by $P(x^*)=x^*+Y^{\perp}.$ 
	 Then $P$ is an onto, linear and bounded map.
	Thus, by the Open Mapping Theorem, $P$ is an open map. Also $y_0^*\in U\cap Y^{\perp}$  and hence
	  $P(U)$ is norm open set containing zero in $X^*/{Y^{\perp}}.$ 
	We choose $0<\delta<\frac{\delta_0}{2}$ such that $B(0,\delta)\subset P(U).$ 
	Consider, $B=P^{-1}(B(0,\delta))\bigcap U \subset B_{X^*}.$ 
	Taking into account norm-norm continuous implies $w^*-w^*$  continuous and  $X^*/{Y^{\perp}} $ is finite dimensional, we get  $B$ is a nonempty relatively  $w^*$ open set in $B_{X^*},$ and
	hence diameter of $B$ is greater than $\varepsilon.$
	 Therefore, there exist $v_1^*,v_2^* \in B$ such that $\|v_1^*-v_2^*\|>\varepsilon.$ Since $\|P(v_1^*)\| <\delta,$ then $d(v_1^*,Y^{\perp})<\delta$ and hence, there exists $u_1^*\in Y^{\perp}$ such that $\|u_1^*-v_1^*\|<\delta.$
	 Similarly for $v_2^*$ there exists $u_2^*\in Y^{\perp}$ such that $\|u_2^*-v_2^*\|<\delta.$ 
	 Without loss of generality we can assume that $u_1^*,u_2^*\in B_{Y^{\perp}},$ otherwise, we consider $\frac{u_1^*}{\|u_1^*\|}$ and $\frac{u_2^*}{\|u_2^*\|}.$ \\
	 For each $i=1,2,\ldots,n$ ,
	  \begin{equation}\notag
	|(u_1^*-y_0^*)(y_i)|\leqslant |(u_1^*-v_1^*)(y_i)|+|(v_1^*-y_0^*)(y_i)|\leqslant \|y_i\| 2 \delta + \tilde{\varepsilon_i} <\| y_i\| \delta_0 + \tilde{\varepsilon_i}<\varepsilon_i 
	 \end{equation}
	 
Similar is also true for $u_2^*$. Thus, $u_1^*, u_2^* \in W.$\\
Now, 	\begin{equation}\notag \|u_1^*-u_2^*\|\geqslant \|v_1^*-v_2^*\|+\|u_1^*-v_1^*\|+\|u_2^*-v_2^*\| >\varepsilon-4\delta>
	\varepsilon-2\delta_0>\varepsilon-\frac{\varepsilon}{2}=\frac{\varepsilon}{2}	
	\end{equation}
	 
	Hence, $W$ has diameter greater than $\frac{\varepsilon}{2},$  a contradiction.
\end{proof}

\begin{theorem}\label{M1}
	Let $Y$ be a finite dimensional subspace of $X$ such that norm of $X/Y$ is weakly average non rough, then norm of $X$ is weakly average non rough.
\end{theorem}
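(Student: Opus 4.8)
The plan is to deduce the statement entirely from the duality recorded in Proposition \ref{dual 1} together with the structural transfer result Proposition \ref{prop weak star bhp 3 space}, so that no projection on $X$ need be constructed. Recall that, by Proposition \ref{dual 1}, the norm of a Banach space is weakly average non rough precisely when its dual enjoys $w^*$-$BHP$. Accordingly, I would re-express the whole assertion as a statement about $w^*$-$BHP$ being transferred from $Y^{\perp}$ up to $X^*$.

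First I would translate the hypothesis. By Proposition \ref{dual 1}, the norm of $X/Y$ being weakly average non rough is equivalent to $(X/Y)^*$ having $w^*$-$BHP$. Invoking the standard isometric identification $(X/Y)^* = Y^{\perp}$, this says exactly that $Y^{\perp}$ has $w^*$-$BHP$, which is the first of the two hypotheses demanded by Proposition \ref{prop weak star bhp 3 space}.

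Next I would verify the dimension hypothesis. The remaining requirement in Proposition \ref{prop weak star bhp 3 space} is that $X^*/Y^{\perp}$ be finite dimensional. This is immediate here, since there is a canonical isometric isomorphism $X^*/Y^{\perp} \cong Y^*$, and $Y$ finite dimensional forces $Y^*$, hence $X^*/Y^{\perp}$, to be finite dimensional. With both hypotheses in place, Proposition \ref{prop weak star bhp 3 space} yields that $X^*$ has $w^*$-$BHP$. A final application of Proposition \ref{dual 1}, read in the reverse direction, then gives that the norm of $X$ is weakly average non rough, as required.

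The point worth flagging, and the only place where any care is needed, is that in contrast to the non rough case proved above (which went through the $BDP$ transfer in Corollary \ref{prop weak star bdp 3 space} and therefore needed a genuine norm-one projection $P : X \to Y$), the weakly average version requires no such projection. This is because the underlying transfer lemma for $w^*$-$BHP$, namely Proposition \ref{prop weak star bhp 3 space}, is phrased purely in terms of $X^*/Y^{\perp}$ being finite dimensional and of $Y^{\perp}$ carrying $w^*$-$BHP$; finite dimensionality of $Y$ alone supplies both ingredients. Thus I anticipate no substantive obstacle beyond correctly matching the two isometric dualities $(X/Y)^* = Y^{\perp}$ and $X^*/Y^{\perp} = Y^*$ to the hypotheses of Proposition \ref{prop weak star bhp 3 space}.
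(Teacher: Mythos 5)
Your proposal is correct and follows essentially the same route as the paper's own proof: both translate the hypothesis via the identifications $(X/Y)^* = Y^{\perp}$ and $X^*/Y^{\perp} = Y^*$, apply Proposition \ref{prop weak star bhp 3 space} to transfer $w^*$-$BHP$ from $Y^{\perp}$ to $X^*$, and conclude by Proposition \ref{dual 1}. Your closing observation that no projection is needed (unlike the $BDP$ case) is accurate and consistent with how the paper structures these results.
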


\begin{proof}
Taking into account the duality $X^*/Y^{\perp} = Y^*$ and $Y$ is finite dimensional, we get $X^*/Y^{\perp}$ is finite dimensional. Again from the duality $(X/Y)^*= Y^{\perp}$ and Proposition $\ref{dual 1},$ we have $Y^{\perp}$ has $w^*$-$BHP.$ Finally, applying 
Proposition $\ref{prop weak star bhp 3 space}$ and Proposition $\ref{dual 1}$ successively, we get our desired result.

\end{proof}

\newpage

We recall the following theorem : 
 \begin{theorem} \label{Phelp's lemma}
\cite[Theorem 5.21]{P} A Banach space $X$ has $RNP$ if and only if every bounded closed convex subset of $X$ is the closed convex hull of its strongly exposed points.
\end{theorem}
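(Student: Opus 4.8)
The plan is to prove both directions by routing through the well-known equivalence that $X$ has $RNP$ if and only if every bounded closed convex subset of $X$ is dentable, i.e.\ admits slices of arbitrarily small diameter. This is precisely the global analogue of the $BDP$ discussed above, and I will use it as the working characterization of $RNP$ for the geometric argument.

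For the easy implication, suppose every bounded closed convex $C\ci X$ satisfies $C=\ov{\mbox{conv}}(\mbox{str-exp}(C))$. Fix such a $C$ and $\e>0$. Since $\mbox{str-exp}(C)\neq\es$, pick $x\in\mbox{str-exp}(C)$ with exposing functional $x^*$. By the very definition of a strongly exposed point, $\{S(C,x^*,\a):\a>0\}$ is a local base at $x$ for the norm topology, so for $\a$ small enough the slice $S(C,x^*,\a)$ has diameter less than $\e$. Hence $C$ is dentable, and as $C$ was arbitrary, $X$ has $RNP$.

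For the hard implication, assume $X$ has $RNP$ and fix a bounded closed convex set $C$. The crux is the following \textbf{Claim}: every slice of $C$ contains a strongly exposed point of $C$. Granting the Claim, I would finish by separation. Put $K=\ov{\mbox{conv}}(\mbox{str-exp}(C))$ and suppose $K\neq C$. Choose $x_0\in C\setminus K$; by Hahn--Banach there are $f\in X^*$ and $\b$ with $f(x_0)>\b>\sup f(K)$. Then, taking $\a=\sup f(C)-\b>0$, the slice $S(C,f,\a)=\{x\in C:f(x)>\b\}$ is nonempty (it contains $x_0$) and disjoint from $K$. By the Claim it contains a strongly exposed point, contradicting $\mbox{str-exp}(C)\ci K$. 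Therefore $K=C$, which is the desired representation.

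It remains to prove the Claim, and this is where I expect the main obstacle to lie. The idea is a variational/perturbation construction. Starting from a given slice $S(C,f_0,\a_0)$, one inductively produces functionals $f_n$, each a small-norm perturbation of $f_{n-1}$ with $\sum_n\|f_n-f_{n-1}\|<\iy$, together with numbers $\a_n\to 0$, so that the slices $S(C,f_n,\a_n)$ are nested, remain inside $S(C,f_0,\a_0)$, and have diameters tending to $0$; dentability (i.e.\ $RNP$) is invoked at each stage to locate a sufficiently small slice inside the current one. Completeness of $X$ makes $(f_n)$ convergent, say to $f$, and the nesting together with the vanishing diameters forces $\bi_\a S(C,f,\a)$ to be a single point $x\in S(C,f_0,\a_0)$, which is then strongly exposed by $f$. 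The delicate part is to carry out the perturbations so as to control simultaneously the \emph{exposure} (that $f$ attains a strict maximum over $C$ at $x$) and the \emph{local-base condition} (that the slice diameters genuinely shrink to $0$); this is exactly the content of the Bishop--Phelps/Bourgain nested-slice machinery (and, alternatively, of Stegall's variational principle, which yields residually many strongly exposing functionals).
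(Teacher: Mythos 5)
The paper itself does not prove this statement; it is quoted verbatim from Phelps' monograph \cite[Theorem 5.21]{P} and used as a black box in the proof of Proposition \ref{prop weak star bscsp 3 space}. So the only question is whether your attempt is a complete, self-contained proof. It is not: there is a genuine gap in the hard direction. Your easy direction is fine (a strongly exposed point yields slices of arbitrarily small diameter, hence dentability of every bounded closed convex set, hence $RNP$ under the standard Rieffel--Maynard--Huff--Davis--Phelps equivalence, which is a reasonable working characterization to assume), and your separation argument correctly reduces the hard direction to the Claim that every slice of $C$ contains a strongly exposed point of $C$. But that Claim \emph{is} Phelps' theorem --- equivalently, the density in $X^*$ of the strongly exposing functionals of $C$ --- and you do not prove it; you describe the shape of a proof and explicitly defer the ``delicate part'' to the Bishop--Phelps/Bourgain machinery or Stegall's principle. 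Invoking those results by name is circular in spirit: they are not easier than the statement being proved, and the whole mathematical content of the theorem lives there.

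Concretely, here is where the sketch breaks down if one tries to execute it. Dentability, applied inside the current slice $S(C,f_n,\a_n)$, produces a subset of small diameter that is a slice $S(C',g,\b)$ of that slice (or of $C$) cut by \emph{some} functional $g$ --- but $g$ has no reason to be norm-close to $f_n$. Your induction, however, needs $\|f_{n+1}-f_n\|$ summable and, more importantly, needs a quantitative lemma of the form: if $\|f-g\|$ is small compared with $\a/\sup\{\|x\|:x\in C\}$, then $S(C,f,\a')\ci S(C,g,\a)$ for suitable $\a'>0$, so that the smallness of the $f_n$-slices transfers to slices of the limit functional $f$. Bridging the mismatch between ``there exists a small slice somewhere inside'' (what dentability gives) and ``there exists a small slice cut by a functional close to the current one'' (what the induction needs) is exactly the content of Phelps' perturbation argument (via Bishop--Phelps subreflexivity) and of Bourgain's lemma; without it, nestedness and vanishing diameters of the $S(C,f_n,\a_n)$ give no control whatsoever on $\bi_{\a>0} S(C,f,\a)$, which may fail to be a singleton, and $f$ may not even attain its supremum on $C$. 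Until this lemma is stated and proved, the Claim --- and hence the forward implication --- remains unestablished.
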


\begin{proposition} \label{prop weak star bscsp 3 space}
Let $Y$ be a subspace of $X$ such that $X^*/Y^{\perp}$ is reflexive and 
$Y^{\perp}$ has $w^*$-$BSCSP.$ Then  $X^*$ has $w^*$-$BSCSP.$
\end{proposition}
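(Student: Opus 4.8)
The plan is to mirror the proof of Proposition \ref{prop weak star bhp 3 space}, arguing by contradiction and transferring a small combination of $w^*$-slices from $B_{Y^\perp}$ up to $B_{X^*}$. Suppose $X^*$ does not have $w^*$-$BSCSP$: then there is $\varepsilon > 0$ such that every convex combination of $w^*$-slices of $B_{X^*}$ has diameter greater than $\varepsilon$. Using that $Y^\perp$ has $w^*$-$BSCSP$, I would first produce a convex combination $C = \sum_{k=1}^m \lambda_k S(B_{Y^\perp}, y_k^*, \alpha_k)$ of $w^*$-slices of $B_{Y^\perp}$ (so each determining functional $y_k^*$ comes from the predual $X/Y$, i.e.\ is evaluation at some $y_k \in X$) with $\operatorname{diam}(C) < \varepsilon/2$. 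The goal is then to build, out of these data, a convex combination of $w^*$-slices of $B_{X^*}$ that is forced by the contradiction hypothesis to have large diameter, yet whose large-diameter witnesses can be pushed back into $C$, giving the contradiction.

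The key technical step is the lifting of slices. For each $k$ I would define a corresponding $w^*$-slice $T_k = S(B_{X^*}, y_k, \beta_k)$ of $B_{X^*}$, using the \emph{same} functionals $y_k$ (viewed now as elements of $X = $ the predual of $X^*$) but slightly enlarged parameters $\beta_k > \alpha_k$, chosen small enough that the tolerances still close up at the end. Set $D = \sum_{k=1}^m \lambda_k T_k$, a convex combination of $w^*$-slices of $B_{X^*}$; by hypothesis $\operatorname{diam}(D) > \varepsilon$, so there exist points $v^*, w^* \in D$ with $\|v^* - w^*\| > \varepsilon$. Writing $v^* = \sum_k \lambda_k v_k^*$ and $w^* = \sum_k \lambda_k w_k^*$ with $v_k^*, w_k^* \in T_k$, I would use the hypothesis that $X^*/Y^\perp$ is reflexive to approximate each $v_k^*$ (and $w_k^*$) in norm by an element of $Y^\perp$: the quotient map $P : X^* \to X^*/Y^\perp$ is open, and reflexivity of the finite-distortion target lets me control the distance $d(v_k^*, Y^\perp)$ via the slice parameter, exactly as the $d(v_1^*, Y^\perp) < \delta$ estimate was used in Proposition \ref{prop weak star bhp 3 space}. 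Normalising the approximants into $B_{Y^\perp}$ and checking that the enlarged parameters $\beta_k$ absorb the perturbation $\|y_k\| \cdot (\text{approximation error})$, I would conclude that the projected points lie in the original slices $S(B_{Y^\perp}, y_k^*, \alpha_k)$, hence that the corresponding convex combinations lie in $C$, while remaining at distance exceeding $\varepsilon/2$ apart. This contradicts $\operatorname{diam}(C) < \varepsilon/2$.

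The main obstacle I anticipate is the role of reflexivity of $X^*/Y^\perp$, which is why the hypothesis here is \emph{reflexive} rather than merely \emph{finite dimensional} as in the $BHP$ case. In the weakly-open ($BHP$) argument one exploits that a relatively $w^*$-open neighbourhood intersected with a norm-ball preimage stays $w^*$-open, using finite-dimensionality of the quotient to guarantee the set is genuinely $w^*$-open and nonempty. For small combinations of slices the relevant compactness/selection must instead come from reflexivity: I expect to need that the slices $S(B_{Y^\perp}, y_k^*, \alpha_k)$ can be realised as $w^*$-slices on the whole of $B_{X^*}$ in a way compatible with the quotient, and that Theorem \ref{Phelp's lemma} (the $RNP$ characterisation via strongly exposed points) or the reflexivity of the quotient supplies the convex-combination structure needed to match $C$ against $D$. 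The delicate bookkeeping is ensuring the slice-parameter enlargements $\beta_k > \alpha_k$, the approximation errors from the open mapping theorem, and the normalisation into $B_{Y^\perp}$ all combine so that the final two points genuinely land in $C$; getting these $\varepsilon$-tolerances to telescope correctly across the $m$ slices simultaneously, rather than for a single slice as in the $BHP$ proof, is where the argument will require the most care.
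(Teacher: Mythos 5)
Your outline reproduces the scaffolding of the paper's argument (contradiction, lift the slices of $B_{Y^\perp}$ to $w^*$-slices of $B_{X^*}$ using the same functionals $y_k\in X$, push the large-diameter witnesses back down into $B_{Y^\perp}$), but it breaks down exactly at the step you label as the "key technical step," and the way you propose to fix it does not work. In Proposition \ref{prop weak star bhp 3 space} the estimate $d(v_1^*,Y^{\perp})<\delta$ did \emph{not} come from the slice or open-set parameter: it came from choosing the witnesses inside $B=P^{-1}(B(0,\delta))\cap U$, and that set is relatively $w^*$-open only because the quotient $X^*/Y^\perp$ is finite dimensional, so that its norm topology coincides with its $w^*$ topology. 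When $X^*/Y^\perp$ is merely reflexive, $P^{-1}(B(0,\delta))$ is no longer $w^*$-open, and membership of a point in a lifted slice $S(B_{X^*},y_k,\beta_k)$ gives no bound at all on its distance to $Y^\perp$. Concretely, take $X=c_0$, $Y=\mathrm{span}\{e_1\}$, and $y_k=e_1+e_2$: the functional $e_1^*\in\ell_1=X^*$ lies in $S(B_{X^*},y_k,\beta_k)$ for every $\beta_k>0$, yet $d(e_1^*,Y^\perp)=1$. So the sentence "reflexivity \ldots lets me control the distance $d(v_k^*,Y^\perp)$ via the slice parameter, exactly as in the $BHP$ case" asserts precisely the thing that needs to be proved, and it is false as stated; this distance control is the entire difficulty of the proposition.

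What the paper actually does to obtain that control is the content you are missing. It projects each lifted slice into the quotient, $A_i=P(S(B_{X^*},y_i,\tilde{\varepsilon}))$, a convex set containing $0$; reflexivity gives $X^*/Y^\perp$ the $RNP$, so by Theorem \ref{Phelp's lemma} the point $0$ is approximated in norm (within $\delta_0$) by a convex combination $\sum_j\gamma_j^i(a_j^i)^*$ of strongly exposed points of $\bar{A}_i$. The strongly exposing functionals $a_j^i$ live in $(X^*/Y^\perp)^*=Y^{**}=Y\subset X$ (this is where reflexivity of $Y$ enters, making these functionals $w^*$-continuous on $X^*$), so they define auxiliary $w^*$-slices $S(B_{X^*},P^*a_j^i,\eta_j^i)$ upstairs. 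Intersecting these with the lifted slices yields relatively $w^*$-open sets whose elements project into sets of diameter $<\delta_0$ around the strongly exposed points; hence the specific convex combinations $\sum_j\gamma_j^i(x_j^i)^*$ of witnesses project to within $2\delta_0$ of $0$, i.e.\ lie within $2\delta_0$ of $Y^\perp$ --- this is the distance estimate you tried to get for free from the slice parameter. Finally, since the set $D$ built this way is a convex combination of relatively $w^*$-open sets rather than of $w^*$-slices, Bourgain's lemma is required before the contradiction hypothesis can be invoked to give $\mathrm{diam}(D)>\varepsilon$. None of these three ingredients --- Phelps' strongly exposed point theorem applied in the quotient, the identification $(X^*/Y^\perp)^*=Y$, and Bourgain's lemma --- appears in your proposal, and without them the $\varepsilon$-bookkeeping you describe in the last paragraph has nothing to act on.
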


\begin{proof}
We prove by contradiction. Suppose  $X^*$ does not have  $w^*$-$BSCSP.$
	 Then there exists $\varepsilon> 0$ 
	 such that diameter of any  convex combination of $w^*$ slices of $B_{X^*}$ is greater than $\varepsilon.$
	  Since $Y^{\perp}$ 
	  has $w^*$-$BSCSP,$ then there exists convex combination of slices $\sum_{i-1}^{n}  \lambda_i S(B_{Y^{\perp}},y_i,\delta)$ in $B_{Y^{\perp}}$ with diameter less than $\frac{\varepsilon}{2}.$ 
Choose $\tilde{\varepsilon},\delta_0> 0$ such that $\tilde{\varepsilon}+2 \delta_0< \delta$ and $ 0<\delta_0<\frac{\varepsilon}{8}.$ 
Define $P:X^*\rightarrow X^*/{Y^{\perp}}$ by $P(x^*)=x^*+Y^{\perp}.$ 
	 Observe, $A_i=P(S(B_{X^*},y_i,\tilde{\varepsilon}))$  is a convex subset of $B_{X^*/{Y^{\perp}}}$ containing zero.
Since $X^*/{Y^{\perp}}$ is reflexive, it has $RNP$  ( see \cite{DU}). Thus
	 by  Theorem $\ref{Phelp's lemma}$, for each $i=1,2,\ldots,n,$
	 $\bar{A}_i= 
	 \ov{co}($strongly exposed points  in $\bar{A}_i)$
	   and hence there exists a convex combination of strongly exposed points
	   $a_i^*=\sum_{j=1}^{n_i} \gamma_j^i (a_j^i)^*$ of $\bar{A}_i$  such that 
	   $\|a_i^*\|<\delta_0.$ 

For each i and j,	let  $(a_j^i)^*$ is strongly exposed by $a_j^i \in S_{({X^*/{Y^{\perp}}})^*} (= S_{Y^{**}}=S_Y)$ 

and choose $\eta_j^i>0$ such that diam$((S(B_{X^*/{Y^{\perp}}}, a_j^i,\eta_j^i)\cap\bar{A}_i)<\delta_0.$ 
	   	   
	Since  $S(B_{X^*/{Y^{\perp}}}, a_j^i,\eta_j^i)\cap\bar{A}_i \neq \emptyset,$ then $S(B_{X^*/{Y^{\perp}}}, a_j^i,\eta_j^i)\cap A_i \neq \emptyset$ which gives

	$S(B_{X^*}, P^*a_j^i,\eta_j^i)\cap S(B_{X^*},y_i,\tilde{\varepsilon}) \neq \emptyset.$  Indeed, let $P z^*\in S(B_{X^*/{Y^{\perp}}}, a_j^i,\eta_j^i)$ for some $z^* \in S(B_{X^*},y_i,\tilde{\varepsilon}).$ Hence,
$ (P^* a_j^i)(z^*)=(P z^*)(a_j^i)>1 - \eta_j^i >  \sup_{w^*\in B_{X^*}} (P^* a_j^i) (w^*) - \eta_j^i.$

	 Now, $ D =\sum_{i=1}^{n} \lambda_i \sum_{j=1}^{n_i}\gamma_j^i ((S(B_{X^*},P^* a_j^i,\eta_j^i)\cap S(B_{X^*},y_i,\tilde{\varepsilon}))$ is a convex combination of nonempty relatively $w^*$ open subset of $B_{X^*}.$
	 From Bourgain's lemma, $D$ contains a convex combination of $w^*$ slices of $B_{X^*}$, see \cite{B3}. By our assumption,  diameter of any convex combination of $w^*$ slices of $B_{X^*}$ is greater than $\varepsilon.$ Hence 
	  diam$(D)> \epsilon.$
	 Thus for each i and j there exist $(x_j^i)^*,(z_j^i)^* \in S(B_{X^*},P^* a_j^i,\eta_j^i)\cap S(B_{X^*},y_i,\tilde{\varepsilon})$ such that $$\|\sum_{i=1}^{n} \lambda_i \sum_{j=1}^{n_i}\gamma_j^i (x_j^i)^*-\sum_{i=1}^{n} \lambda_i \sum_{j=1}^{n_i}\gamma_j^i   (z_j^i)^* \|>\varepsilon.$$
	   Observe, $$\sum_{j=1}^{n_i}\gamma_j^i (x_j^i)^* \in   \sum_{j=1}^{n_i}\gamma_j^i \Big(S(B_{X^*},P^* a_j^i,\eta_j^i)\cap S(B_{X^*},y_i,\tilde{\varepsilon})\Big)$$ 
	    $$ \Rightarrow  P( \sum_{j=1}^{n_i}\gamma_j^i (x_j^i)^*)\in \sum_{j=1}^{n_i}\gamma_j^i (S(B_{X^*/{Y^{\perp}}}, a_j^i,\eta_j^i)\cap A_i ).$$ 
Thus, for each $i=1,2,\ldots,n,$ diam $(\sum_{j=1}^{n_i}\gamma_j^i (S(B_{X^*/{Y^{\perp}}}, a_j^i,\eta_j^i)\cap A_i ) <\delta_0$ gives,	
$$\|P( \sum_{j=1}^{n_i}\gamma_j^i (x_j^i)^*) - \sum_{j=1}^{n_i}\gamma_j^i (a_j^i)^*\|<\delta_0$$
Thus $\|P( \sum_{j=1}^{n_i}\gamma_j^i (x_j^i)^*)\|<\delta_0+\| \sum_{j=1}^{n_i}\gamma_j^i (a_j^i)^*\| <2\delta_0$ and hence $d(\sum_{j=1}^{n_i}\gamma_j^i (x_j^i)^*,Y^{\perp})<2\delta_0$.
   

 Thus, there exists $v_i^*\in B_{Y^{\perp}}$ such  that  $\|v_i^* - \sum_{j=1}^{n_i}\gamma_j^i (x_j^i)^*\|<2\delta_0.$  

	Similarly, for each $i=1,2,\ldots,n$ there exists $w_i^*\in B_{Y^{\perp}}$ such that $\|w_i^* - \sum_{j=1}^{n_i}\gamma_j^i (z_j^i)^*\|<2\delta_0.$ \\
	For each $i=1,2,\ldots,n$, $$v_i^*(y_i)=\sum_{j=1}^{n_i}\gamma_j^i (x_j^i)^*(y_i)+(v_i^*-\sum_{j=1}^{n_i}\gamma_j^i (x_j^i)^*)(y_i)>1-\tilde{\varepsilon}-2\delta_0
$$
	Similarly, $ w_i^*(y_i)>1-\tilde{\varepsilon}-2\delta_0$ and hence $v_i^*,w_i^*
	\in S(B_{Y^{\perp}},y_i,\tilde{\varepsilon}+2\delta_0)$\\
	Finally, 
	\begin{equation}\notag
	\begin{split}
	 \|\sum_{i=1}^{n} \lambda_i v_i^* - \sum_{i=1}^{n} \lambda_i w_i^*\| 
	\geqslant \|\sum_{i=1}^{n} \lambda_i \sum_{j=1}^{n_i}\gamma_j^i (x_j^i)^* - \sum_{i=1}^{n} \lambda_i \sum_{j=1}^{n_i}\gamma_j^i (z_j^i)^*  \|\hspace{4 cm} \\ 
	-\|\sum_{i=1}^{n} \lambda_i (v_i)^*-\sum_{i=1}^{n} \lambda_i \sum_{j=1}^{n_i}\gamma_j^i (x_j^i)^*  \| -\|\sum_{i=1}^{n} \lambda_i w_i^* - \sum_{i=1}^{n} \lambda_i \sum_{j=1}^{n_i}\gamma_j^i (z_j^i)^* \| \\ 
	> \varepsilon - \sum_{i=1}^{n} \lambda_i \|v_i^* - \sum_{j=1}^{n_i}\gamma_j^i (x_j^i)^*   \|-\sum_{i=1}^{n} \lambda_i \|w_i^*- \sum_{j=1}^{n_i}\gamma_j^i (z_j^i)^*   \| \hspace{1 cm}\\
	>\varepsilon -4\delta_0 \hspace{8 cm} 
	\end{split}
	\end{equation}	

Hence, $\mathrm{diam} (\sum_{i=1}^{n} \lambda_i  S(B_{Y^{\perp}},y_i,\tilde{\varepsilon}+2\delta_0) \geqslant \varepsilon-4\delta_0.$\\
	Since $\sum_{i=1}^{n} \lambda_i  S(B_{Y^{\perp}},y_i,\tilde{\varepsilon}+2\delta_0) \subset \sum_{i=1}^{n} \lambda_i  S(B_{Y^{\perp}},y_i,\delta),$ we have\\
$\mathrm{diam}( \sum_{i=1}^{n} \lambda_i  S(B_{Y^{\perp}},y_i,\delta) \geqslant \mathrm{diam} (\sum_{i=1}^{n} \lambda_i  S(B_{Y^{\perp}},y_i,\tilde{\varepsilon}+2\delta_0))>\varepsilon-4\delta_0 >\frac{\varepsilon}{2}$,  a contradiction.
\end{proof}

\begin{theorem}\label{M2}
	Let $Y$ be a reflexive subspace of $X$ such that the norm of $X/Y$ is  average non rough, then the norm of $X$ is average non rough.
\end{theorem}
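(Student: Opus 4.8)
The plan is to follow the same dualization strategy used in the proof of Theorem \ref{M1} and the preceding non-rough theorem: I would transfer the whole statement to the dual space $X^*$, recognize the hypotheses as exactly those of the stability result Proposition \ref{prop weak star bscsp 3 space}, apply it, and transfer back.

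First I would rewrite the hypothesis on $X/Y$ in dual terms. By the standard duality $(X/Y)^* = Y^\perp$ together with Proposition \ref{dual 1}, the assumption that the norm of $X/Y$ is average non rough is equivalent to the statement that $Y^\perp$ has $w^*$-$BSCSP$.

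Next I would verify the reflexivity condition required by Proposition \ref{prop weak star bscsp 3 space}. Because $Y$ is reflexive, its dual $Y^*$ is reflexive; and under the standard duality $X^*/Y^\perp = Y^*$ (given by the restriction map, which is onto by Hahn--Banach with kernel $Y^\perp$), this makes the quotient $X^*/Y^\perp$ reflexive. Both hypotheses of Proposition \ref{prop weak star bscsp 3 space} --- namely $X^*/Y^\perp$ reflexive and $Y^\perp$ has $w^*$-$BSCSP$ --- are now in place, so applying that proposition yields that $X^*$ has $w^*$-$BSCSP$. Invoking Proposition \ref{dual 1} one last time, in the direction ``$X^*$ has $w^*$-$BSCSP$ implies the norm of $X$ is average non rough,'' finishes the proof.

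I do not expect any genuine obstacle at the level of this theorem, since each step is a direct appeal to an already-established duality or stability result; all the real difficulty was absorbed into Proposition \ref{prop weak star bscsp 3 space}, whose proof needed Bourgain's lemma and the Radon--Nikodym / strongly-exposed-point machinery to pull a small-diameter convex combination of slices from the reflexive quotient up to $B_{X^*}$. The only points worth confirming carefully are the two identifications $(X/Y)^* = Y^\perp$ and $X^*/Y^\perp = Y^*$ together with the transfer of reflexivity along the latter, all of which are standard.
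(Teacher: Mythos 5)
Your proposal is correct and follows essentially the same route as the paper's own proof: both transfer the hypotheses via the dualities $(X/Y)^* = Y^{\perp}$ and $X^*/Y^{\perp} = Y^*$ together with Proposition \ref{dual 1}, then apply Proposition \ref{prop weak star bscsp 3 space} and dualize back. The only difference is that you spell out the reflexivity transfer and the Hahn--Banach identification a bit more explicitly, which the paper leaves implicit.
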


\begin{proof}
Taking into account the duality $X^*/Y^{\perp} = Y^*$ and  reflexivity of $Y,$  we conclude that  $X^*/Y^{\perp}$ is reflexive. Again from the duality $(X/Y)^*= Y^{\perp}$ and Proposition $\ref{dual 1},$ we have $Y^{\perp}$ has $w^*$-$BSCSP.$ Finally applying 
Proposition $\ref{prop weak star bscsp 3 space}$ and Proposition $\ref{dual 1}$ successively, we get our desired result.
 
\end{proof}

\begin{Acknowledgment}
The authors thank Professor Abraham Rueda Zoca for valuable suggestions. The first author's research is supported by the dean's supplemental fund provided by the College of Arts and Sciences, Loyola University, Maryland, USA.The second  author's research is funded by the National Board for Higher Mathematics (NBHM), Department of Atomic Energy (DAE), Government of India, Ref No: 0203/11/2019-R$\&$D-II/9249.
\end{Acknowledgment}

\end{document}